\documentclass[11pt,reqno]{amsart}
\usepackage{amsmath,amsthm,amssymb,mathrsfs,stmaryrd,color,fullpage}
\usepackage[all]{xy}
\usepackage{url}

\usepackage{geometry}
\usepackage{enumitem}
\setlist[enumerate]{itemsep=2pt,parsep=2pt,before={\parskip=2pt}}

\usepackage{relsize}
\usepackage[bbgreekl]{mathbbol}
\usepackage{amsfonts}
\DeclareSymbolFontAlphabet{\mathbb}{AMSb}
\DeclareSymbolFontAlphabet{\mathbbl}{bbold}

\usepackage[colorlinks=true,hyperindex, linkcolor=magenta, pagebackref=false, citecolor=cyan]{hyperref}

\newcommand{\cosimp}[3]{\xymatrix@1{#1 \ar@<.4ex>[r] \ar@<-.4ex>[r] & {\ }#2 \ar@<0.8ex>[r] \ar[r] \ar@<-.8ex>[r] & {\ } #3 \ar@<1.2ex>[r] \ar@<.4ex>[r] \ar@<-.4ex>[r] \ar@<-1.2ex>[r] & \cdots }}

\newcommand{\colim}{\mathop{\mathrm{colim}}}

\newcommand{\adjunction}[4]{\xymatrix@1{#1{\ } \ar@<0.3ex>[r]^{ {\scriptstyle #2}} & {\ } #3 \ar@<0.3ex>[l]^{ {\scriptstyle #4}}}}

\newtheorem{theorem}{Theorem}[section]
\newtheorem{proposition}[theorem]{Proposition}
\newtheorem{lemma}[theorem]{Lemma}
\newtheorem{corollary}[theorem]{Corollary}

\theoremstyle{definition}
\newtheorem{definition}[theorem]{Definition}

\newtheorem{remark}[theorem]{Remark}

\newtheorem{example}[theorem]{Example}

\newtheorem{notation}[theorem]{Notation}

\newtheorem{construction}[theorem]{Construction}

\newtheorem*{convention*}{Conventions}

\begin{document}
\title{Vanishing of cohomology in  infinitely ramified towers}
\author{Bhargav Bhatt}
\begin{abstract}
We give a new proof of vanishing result of Esnault for the cohomology of constructible sheaves in the tower of ``mock'' Frobenius covers of projective space. The key idea is to use (a global form of) the perversity of nearby cycles.
\end{abstract}

\maketitle
\setcounter{tocdepth}{1}
\tableofcontents

We fix the following notation throughout the note.

\begin{notation}
Let $k$ be an algebraically closed field. Fix a prime $p$ invertible on $k$ and a finite coefficient ring $\Lambda$ of $p$-power order. For any qcqs scheme $X$, write $D(X) = D(X_{et},\Lambda)$ for the usual (unbounded) derived category of sheaves of $\Lambda$-modules on $X_{et}$, and let $D^b_{cons}(X) \subset D(X)$ be the full subcategory of constructible complexes with perfect stalks. All cohomology is \'etale cohomology.
\end{notation}

Given a proper variety $X/k$, the \'etale cohomological dimension of constructible sheaves of $\Lambda$-modules is $2\dim(X)$. On the other hand, if $X$ is affine, then the cohomological dimension is bounded by $\dim(X)$ by the Artin vanishing theorem. Starting with work of Scholze (see Remark~\ref{rmkhistory} for more), it was recently observed that certain towers of finite covers of $p$-power degree of projective varieties behave, from the perspective of cohomological dimension, as though they were affine in the limit. In this note, we focus on one instance of this phenomenon. Specifically, our goal is to offer another proof of the following vanishing theorem:

\begin{theorem}[Scholze, Esnault, Reinecke]
\label{PnVanishingIntro}
Let $\phi:\mathbf{P}^n \to \mathbf{P}^n$ be the standard toric Frobenius lift, i.e., $\phi([x_0,...,x_n]) = [x_0^p,...,x_n^p]$. For any constructible \'etale sheaf $F$ of $\Lambda$-modules on $\mathbf{P}^n$, we have
\[ \colim_i  H^j(\mathbf{P}^n, (\phi^i)^* F) = 0\]
for $j > \dim \mathrm{Supp}(F)$. 
\end{theorem}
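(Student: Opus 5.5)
We proceed by induction on $d = \dim \mathrm{Supp}(F)$, using the abstract's hint that the key input is a global form of the perversity of nearby cycles. The strategy is to express the colimit $\colim_i H^*(\mathbf{P}^n,(\phi^i)^*F)$ as the cohomology of a single object on a limit space. We then show this object is, up to shift, perverse on an affine-like space, so that Artin vanishing applies.

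\textbf{Step 1: Reduction to the torus.} The toric boundary $\partial = \mathbf{P}^n \setminus \mathbf{G}_m^n$ is stable under $\phi$ and is a union of copies of $\mathbf{P}^{n-1}$ on which $\phi$ restricts to the toric Frobenius. The excision triangle for $j_!j^*F \to F \to i_*i^*F$ (with $i\colon\partial\hookrightarrow\mathbf{P}^n$, $j$ the open torus) commutes with $\phi^*$. Using the Mayer--Vietoris/stratification of $\partial$ into torus orbits and induction on $n$, together with induction on $d$ (note $\dim\mathrm{Supp}(i^*F)\le d$), it suffices to treat $j_!G$ for a constructible sheaf $G$ on $T=\mathbf{G}_m^n$ with support of dimension $d$. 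A further dévissage (constructible sheaves are filtered by $!$-extensions of local systems on smooth locally closed subvarieties, and lower-dimensional pieces are handled by induction) reduces us to the case where $G$ is generic and we must show vanishing in degrees $> d$.

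\textbf{Step 2: Interpretation via nearby cycles.} On the torus, $\phi$ is the $p$-power map, which is finite étale of $p$-power degree. The key idea is to degenerate $\mathbf{P}^n$, for instance by viewing the tower as the special fibre of a family over a trait, or by degenerating $\mathbf{P}^n$ to a toric configuration. In such a degeneration, $\colim_i (\phi^i)_*(\phi^i)^*$ becomes a nearby-cycles-type functor: the pushforward along the inverse limit of the tower of the pullback of $F$. Concretely, write $\pi_i=\phi^i$. By the projection formula, $\colim_i H^j(\mathbf{P}^n,\pi_i^*F) = H^j(\mathbf{P}^n, F\otimes \colim_i \pi_{i*}\Lambda)$. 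On $T$, $\colim_i \pi_{i*}\Lambda$ is the regular representation of $\mathbb{Z}_p(1)^n$ pushed out: a colimit of sums of rank-one Kummer local systems $\mathcal{L}_\chi$ for characters $\chi$ of $p$-power order. The trivial character contributes the constant sheaf, but transition maps in the colimit kill it in high degree because traces of degree-$p$ covers act by $p$. Hence $\colim_i$ is nilpotent on the "invariant" part.

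\textbf{Step 3: Perversity and Artin vanishing.} The main point, and the obstacle I expect, is to show that $K=j_!(G\otimes \colim\pi_{i*}\Lambda)$ computed on $\mathbf{P}^n$ agrees with $Rj_*$ of the same object, i.e.\ that the boundary contributions vanish in the colimit. This is where a global perversity statement for nearby cycles enters: the $\mathbb{Z}_p(1)^n$-equivariant colimit behaves like the nearby cycles for the monodromy of Kummer sheaves, and $!$- and $*$-extensions agree after taking the colimit over nontrivial characters (ramification along each boundary divisor is "infinite"). Granting $j_!\simeq Rj_*$ in the colimit, $G[d]$ perverse on $T$ gives $Rj_*$ of it perverse on $\mathbf{P}^n$ restricted to the affine $T$. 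Artin vanishing for affine $T$ then gives $H^j=0$ for $j>d$. I would attempt the comparison $j_!\to Rj_*$ first locally along one boundary divisor, $\mathbf{A}^1\times\mathbf{G}_m^{n-1}$, where the stalk at $0$ of $Rj_*\mathcal{L}_\chi$ vanishes for nontrivial $\chi$ (tame monodromy without invariants). I would then pass to the colimit and handle higher-codimension strata by the same Künneth-type argument.
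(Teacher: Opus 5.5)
Your Step~1 reduction to $j_!G$, with $G$ constructible on the torus $T$, is fine. Steps~2--3, however, rest on a mechanism that is not available here.

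Since $\Lambda$ has $p$-power order, $p$ is nilpotent in $\Lambda$, while $\phi^i|_T$ is Galois with group the $p$-group $\mu_{p^i}^n$. So $\pi_{i*}\Lambda|_T$, which corresponds to the regular representation $\Lambda[\mu_{p^i}^n]$, is a unipotent local system: an iterated extension of constant sheaves. It does not split into Kummer sheaves $\mathcal{L}_\chi$; for $\Lambda=\mathbf{F}_p$ there are not even any nontrivial characters of $p$-power order. The ``nontrivial tame monodromy has no invariants'' argument is the $\ell\neq p$ argument, and for $\ell\neq p$ the theorem is false, since $\phi^*$ is multiplication by $p^n$ on $H^{2n}(\mathbf{P}^n,\mathbf{F}_\ell)$. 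Likewise, ``trace after pullback is multiplication by $p$'' only disposes of top-degree classes. The real content of the theorem is in degrees $d<j<2d$.

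The decisive failure is the claimed comparison $j_!\simeq Rj_*$ ``in the colimit'', which is false. Take $n=1$ and $G=\Lambda$. Along $[p]^*$ one gets $\colim_i H^*_c(\mathbf{G}_m,\Lambda)=\Lambda[-1]$ (here $H^1_c$ survives and $H^2_c$ dies), but $\colim_i H^*(\mathbf{G}_m,\Lambda)=\Lambda$. Geometrically, on $Y=\lim_\phi\mathbf{P}^n$ the stalk of $i^*Rj_*\Lambda$ at a boundary point is $R\Gamma(\mathrm{Spec}\,K_\infty,\Lambda)=\Lambda$, where $K_\infty$ adjoins all $p$-power roots of a local equation. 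Because $\mathrm{Gal}(\overline K/K_\infty)$ is prime to $p$, infinite ramification kills the $H^1$ of the link but not its $H^0$.

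What is true, and what you actually need, is that on the infinitely ramified limit the functor $i^*Rj_*[-1]$ preserves semiperversity (Proposition~\ref{nearby}). This is a global form of the semiperversity of nearby cycles. It is proved by Artin vanishing on the Milnor fibre over $\overline K$, plus exactness of invariants for the prime-to-$p$ group $\mathrm{Gal}(\overline K/K_\infty)$. It is then transported to $Y$ by base change along $\widetilde{\mathbf{A}^1}\to\mathbf{A}^1$ and descent along a pro-finite-\'etale $\mathbf{Z}_p(1)$-torsor.

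With this in hand your reduction closes. Shift so that $G$ is semiperverse. Then $i^!j_!G\simeq i^*Rj_*G[-1]$ is semiperverse, and consider the triangle $R\Gamma(Y,j_!G)\to R\Gamma(\widetilde T,G)\to R\Gamma(\partial_Y,i^*Rj_*G)$, with $\partial_Y=f^{-1}(\partial)$. The middle term lies in $D^{\le 0}$ by Artin vanishing on a limit of affines. The last term lies in $D^{\le -1}$ by induction on $n$. The paper runs this same triangle, but with a single hyperplane $D$ and $U=\mathbf{A}^n$, phrased as ``$i_Y^!$ preserves semiperversity''.
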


A brief history of Theorem~\ref{PnVanishingIntro} is discussed in the forthcoming Remark~\ref{rmkhistory}. Applying this theorem to the constant sheaf on a subvariety leads to the following key example: 

\begin{example}[The case of subvarieties]
\label{exsubvar}
Let $Z \subset \mathbf{P}^n$ is a closed subvariety of dimension $d$. For each $m \geq 0$, write $Z_m = (\phi^m)^{-1}(Z) \subset \mathbf{P}^n$ for the preimage.  Then Theorem~\ref{PnVanishingIntro} implies that 
\[ \colim_m H^{> d}(Z_m,\Lambda) = 0.\]
In particular, for each $\alpha \in H^{> d}(Z,\Lambda)$, there is $m \gg 0$ such that $(\phi^m)^* \alpha = 0$ in $H^{> d)}(Z_{m},\Lambda)$.  As the Chern class $c_1(\mathcal{O}(1)) \in H^2(\mathbf{P}^n,\Lambda)$ becomes divisible by $p^m$ after pullback along $\phi^m$, this last vanishing assertion is clear for those classes in $H^j(Z,\Lambda)$ that are divisible by $c_1(\mathcal{O}(1))|_Z$, i.e., are cup product of classes in $H^{j-2}(Z,\Lambda)$ with $c_1(\mathcal{O}(1))|_Z$. However, typically this accounts for very few classes integrally\footnote{With rational coefficients, the picture is quite different, e.g., if $Z$ is smooth, then every class in $H^{> d}(Z,\mathbf{Q}_p)$ is divisible by $c_1(\mathcal{O}(1))$ by Hard Lefschetz. Thus, the non-triviality of Theorem~\ref{PnVanishingIntro} is related to the failure of Hard Lefschetz with integral coefficients.}: for instance, if $\mathcal{O}(1)|_Z$ was itself a large $p$-power as a line bundle, then $c_1(\mathcal{O}(1))|_Z \in H^2(Z,\Lambda)$ would simply be $0$, yet $H^{>d}(Z,\Lambda) \neq 0$ as long as $d > 0$.

\end{example}

Due to the interaction between the dimension of the support and cohomological degree in Theorem~\ref{PnVanishing}, it is natural to reformulate the theorem in terms of perverse sheaves: it states that the functor $\colim_i R\Gamma(\mathbf{P}^n, (\phi^i)^* (-))$ is perverse right exact. We shall obtain this statement as a reflection of the ramification properties of the map $\phi$. More precisely, we deduce it from the following general  statement about certain towers of varieties that might be useful in other similar contexts:

\begin{theorem}[Infinite ramification reduces cohomological dimension]
\label{GeneralVanThm}
Let $X/k$ be a finite type $k$-scheme  with an effective Cartier divisor $D \subset X$ with complement $U$. Let $f:Y \to X$ be an integral surjective map of $k$-schemes such that $f^* D$ admits a compatible system of $p$-power roots. Then
\[ \mathrm{pcd}(Y) = \max(\mathrm{pcd}\left(f^{-1} D), \mathrm{pcd}(f^{-1} U)\right),\]
where $\mathrm{pcd}(-)$ denotes the perverse cohomological dimension\footnote{\label{pcd}More precisely, if $Z$ is any $k$-scheme that can be obtained as an inverse limit of finite type $k$-schemes along finite transition maps (such as $Y$, $f^{-1} D$ or $f^{-1} U$), then, as discussed in \S \ref{sec:IntVar}, there is a reasonable perverse $t$-structure on $D(Z)$ that can be defined via a limiting process from the finite type case. With this definition, $\mathrm{pcd}(Z)$ is the minimal $i$ such that $R\Gamma(Z,-)$ carries ${}^p D^{\leq 0}(Z)$ into $D^{\leq i}(\Lambda)$. Note that $i \geq 0$ if $Z \neq \emptyset$ by contemplating skyscraper sheaves.} In particular, if $U$ is affine (e.g., if $D$ is ample), then 
\[ \mathrm{pcd}(Y) = \mathrm{pcd}(f^{-1} D).\]
\end{theorem}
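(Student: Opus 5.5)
The plan is to prove the inequality $\mathrm{pcd}(Y) \leq \max(\mathrm{pcd}(f^{-1}D), \mathrm{pcd}(f^{-1}U))$ together with the reverse inequality. Write $i: Y_D := f^{-1}D \to Y$ and $j: Y_U := f^{-1}U \to Y$ for the closed and open immersions.

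\textbf{The lower bound.} I will show
\[ \mathrm{pcd}(Y) \geq \max\big(\mathrm{pcd}(Y_D), \mathrm{pcd}(Y_U)\big). \]
For the closed part, $i_*$ is perverse t-exact because $i$ is a closed immersion, so this bound is immediate. For the open part, I want $j_!$ to be perverse right t-exact. This should hold because $Y_D$ is an inverse limit of Cartier divisors, so $j$ is an affine open immersion. By Artin vanishing, passed to the limit, both $Rj_*$ and $j_!$ are then perverse t-exact. Since $R\Gamma(Y, j_!K) \neq R\Gamma(Y_U,K)$, I will instead use $Rj_*$, for which $R\Gamma(Y, Rj_*K) = R\Gamma(Y_U, K)$. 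This gives the lower bound.

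\textbf{The upper bound.} For $K \in {}^pD^{\leq 0}(Y)$, use the triangle
\[ j_!j^*K \to K \to i_*i^*K. \]
The restriction $i^*$ is perverse right t-exact, so $R\Gamma(Y, i_*i^*K) \in D^{\leq \mathrm{pcd}(Y_D)}$. The problem is the term $R\Gamma(Y, j_!j^*K)$, which is compactly supported-type cohomology on $Y_U$ and is not controlled by $\mathrm{pcd}(Y_U)$. This is where infinite ramification enters.

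The key claim I will aim for is that the natural map
\[ j_!L \to Rj_*L \]
has cone $i_*i^*Rj_*L$, and that after passing up the tower of $p$-power roots of $f^*D$ this cone lies in ${}^pD^{\leq -1}$. Equivalently, I want $i^*Rj_*L$ to become "one degree better" than expected for perverse $L$. This is a global form of perversity of nearby cycles. Locally, $D = V(t)$, and $Y$ has compatible roots $t^{1/p^n}$. For perverse $L$ on $Y_U$, the complex $i^*Rj_*L[-1]$ is computed as the derived invariants of the tame inertia acting on nearby cycles, i.e. a fiber of $(\gamma - 1)$ on $\Psi_t L$. Adjoining all $p$-power roots of $t$ kills the pro-$p$ part of the inertia. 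On $\Lambda$-modules of $p$-power order, the prime-to-$p$ part acts through a group of order invertible in $\Lambda$, so it has no higher cohomology. Thus in the limit one gets
\[ i^*Rj_*L \simeq \Psi(L)^{I} \quad \text{(no shift by } [-1]\text{)}, \]
and nearby cycles $\Psi[-1]$ are perverse t-exact (Gabber). This puts $i^*Rj_*L$ in ${}^pD^{\leq -1}(Y_D)$.

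To make this rigorous, I would work at finite level: $X_n$ is the normalization of the $p^n$-th root cover, and $Y$ maps to $\lim X_n$. I would show that the colimit of the transition maps on $i^*Rj_*$ kills the $\mathcal{H}^1$ of the inertia invariants, since the maps multiply the generator of $\mathbb{Z}_p(1)$ by $p$. I would then pass to the limit using the limit perverse t-structure of \S\ref{sec:IntVar}, and base change along the integral map $Y \to \lim X_n$. Being integral, this map is perverse right t-exact for pullback. Globalizing the local computation, which only needs a local equation for $D$, is fine because the statement is local on $Y_D$.

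\textbf{Concluding the upper bound.} Given the claim, take $L = j^*K \in {}^pD^{\leq 0}(Y_U)$ and write
\[ R\Gamma(Y, j_!L) \to R\Gamma(Y_U, L) \to R\Gamma(Y_D, i^*Rj_*L). \]
The middle term lies in $D^{\leq \mathrm{pcd}(Y_U)}$, and the right term lies in $D^{\leq \mathrm{pcd}(Y_D)-1}$. Hence the left term lies in $D^{\leq \max}$, and then so does $R\Gamma(Y,K)$.

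The final clause follows once $U$ is affine. Then $Y_U$ is integral over an affine scheme, hence affine, so Artin vanishing in the limit gives $\mathrm{pcd}(Y_U) \leq 0 \leq \mathrm{pcd}(Y_D)$; the case $Y_D = \emptyset$ is trivial.

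\textbf{Main obstacle.} The hardest step is the nearby-cycles claim: that infinite $p$-power ramification along $D$ makes $i^*Rj_*$ shift perversity down by one. This requires a global, non-henselian form of perversity of nearby cycles over a divisor rather than over a trait. I would first try to reduce to the universal case, pulling back from $\mathbf{A}^1$ via $t$ and using the finite covers $t \mapsto t^{p^n}$. On that base, $Rj_*$ along $0 \in \mathbf{A}^1$ after passing to the $p$-power root tower is computed by the $\mathbb{Z}_{(p')}(1)$-invariants of $\Psi$.
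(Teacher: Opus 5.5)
Your deduction of the theorem from your key claim is correct and is essentially the paper's argument. The claim that $i^*Rj_*L\in{}^pD^{\leq -1}(Y_D)$ for semiperverse $L$ is exactly Proposition~\ref{nearby}. Your use of $j_!j^*K\to K\to i_*i^*K$ followed by $j_!L\to Rj_*L\to i_*i^*Rj_*L$ is the other face of the octahedron behind the paper's route, which proves right $t$-exactness of $i_Y^!$ (Corollary~\ref{nearby!restricti}) and then uses $i_*i^!F\to F\to j_*j^*F$. Your local mechanism also matches Proposition~\ref{nearbysemi}: after adjoining $t^{1/p^\infty}$ the remaining inertia is prime to $p$, so taking invariants is exact, and nearby cycles are semiperverse. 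The problems are in how you globalize the key claim, at two concrete points.

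\emph{First}, $Y$ need not map to $\lim X_n$, nor to $\widetilde{X}=X\times_{\mathbf{A}^1}\widetilde{\mathbf{A}^1}$, even Zariski locally. The hypothesis gives compatible $p$-power roots of the \emph{divisor} $f^*D$, not of the function $t$. After trivializing the line bundles one only has $\sigma_n^{p^n}=v_n t$, with $v_n$ a unit on $Y$ that need not be a $p^n$-th power. Since $v_n$ lives on $Y$, it cannot in general be absorbed by changing the local equation of $D$ on $X$. (Normalizations are also the wrong objects unless $Y$ is normal; the paper uses the base change $\widetilde{X}$.) The paper handles this by passing to the $\mathbf{Z}_p(1)$-torsor $Y'=Y\times_{X[D^{1/p^\infty}]}\widetilde{X}\to Y$. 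This map is pro-finite \'etale and surjective, so $j_*$ commutes with pullback along it, and that pullback detects semiperversity (Lemma~\ref{PushSemiperverse}(4)). Meanwhile $Y'$ does carry an integral $X$-map to $\widetilde{X}$.

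\emph{Second}, once you have an integral $g\colon Y\to\widetilde{X}$, pulling back along $g$ (your ``perverse right $t$-exact for pullback'') cannot transfer the statement. The functor $g^*$ commutes with $i^*$ but not with $j_*$, since $g$ is not smooth and nearby cycles genuinely change under finite covers. Moreover, a general semiperverse $L$ on $Y_U$ is not pulled back from $\widetilde{U}$. The transfer has to go through pushforward, in three steps:
\begin{enumerate}
\item Proper base change gives $g_{D,*}\,i_Y^*j_{Y,*}L\simeq i_{\widetilde{X}}^*j_{\widetilde{X},*}(g_{U,*}L)$.
\item The complex $g_{U,*}L$ is semiperverse by Lemma~\ref{PushSemiperverse}(1), so the right side is semiperverse after $[-1]$ by the $\widetilde{X}$ case.
\item The functor $g_{D,*}$ \emph{detects} semiperversity by Lemma~\ref{PushSemiperverse}(3).
\end{enumerate}
That detection property is the ingredient missing from your sketch. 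With these two repairs your argument goes through.
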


Note that Theorem~\ref{GeneralVanThm} is essentially never true in the  finite type world for $X$ proper, e.g., if $X$ is a smooth projective curve and $D = \{x\} \subset X$ is a closed point, then $\mathrm{pcd}(D) = \mathrm{pcd}(X-D) = 0$ but $\mathrm{pcd}(X) = 1$. 
To apply Theorem~\ref{GeneralVanThm} to the tower in Theorem~\ref{PnVanishingIntro}, one takes  $D$ to be one of the co-ordinate hyperplanes in $X=\mathbf{P}^n$ and proceeds via induction.

\begin{remark}[The history of Theorem~\ref{PnVanishingIntro}]
\label{rmkhistory}
The special case of constant sheaves (Example~\ref{exsubvar}) when $k$ has characteristic $0$ was  proven by Scholze \cite[Theorem 17.3]{ScholzeICM}, using perfectoid spaces. Esnault then gave a relatively elementary proof \cite[Theorem 1.2]{EsnaultCohVan} of Theorem~\ref{PnVanishingIntro} in the generality above. In the meanwhile,  a generalization of the almost purity theorem was proven (\cite{AndreAbhyankar} and \cite[Theorem 10.9]{BhattScholzePrisms}), which allowed Reinecke \cite[Example 3.4]{ReineckeMg} to extend Scholze's argument to treat arbitrary $F$ as in Theorem~\ref{PnVanishing}, still when $k$ has characteristic $0$. 
\end{remark}

\begin{remark}[An abelian variety analog]
\label{rmkabvar}
As an abelian variety analog of  Theorem~\ref{PnVanishingIntro}, one can ask the following question (raised previously in \cite[Remark 2.11]{BSS}): if $A/k$ is an abelian variety and $F$ is a constructible $\Lambda$-sheaf on $A$, does
\[ \colim_{n} H^i(A, [p^n]^* F) = 0\]
for $i > \dim \mathrm{Supp}(F)$? Here $[p^n]:A \to A$ denotes the multiplication by $p^n$ map. When $k$ has characteristic $0$, this can be shown via perfectoid methods (see \cite[Theorem 1]{PerfectoidAbelianAWS} and \cite[Example 3.5]{ReineckeMg}); in fact, when $k$ is a perfectoid $p$-adic field, this method yields the theorem for possibly non-algebraizable abeloid varieties. Similarly, when $k=\mathbf{C}$,  the claim holds true for all compact complex tori using the results in \cite{BSS} (see Theorem~\ref{AbVarVanChar0} below). However, for ground fields of positive characteristic, the question is open in general and one only has partial results (though closely related questions do have positive answers for $\mathbf{Q}_\ell$-coefficients by recent work \cite{EsnaultKerzHL} of Esnault--Kerz).  In fact, our original motivation for pursuing an alternative proof of Theorem~\ref{PnVanishingIntro} was precisely  to obtain a proof that might adapt to the abelian variety setting, though that goal  remains just as elusive: the proof of Theorem~\ref{PnVanishingIntro} relies heavily on exploiting the ramification of $\phi$, and it is unclear how to adapt the strategy to the unramified map $[p]:A \to A$.
\end{remark}

\begin{remark}
We have assumed throughout the paper that the order of $\Lambda$ is invertible on $k$. This assumption is made to avoid trivialities and is not necessary. In fact, if $\# \Lambda$ is a power of the characteristic of $k$, then all the results (Theorem~\ref{PnVanishingIntro}, Theorem~\ref{GeneralVanThm} as well as the abelian variety analog as in Remark~\ref{rmkabvar}) are still true and rather  formal: thanks to the Artin--Schreier sequence, the $\Lambda$-cohomological dimension of any proper $k$-scheme $X$ is $\dim(X)$ in this case. 
\end{remark}

We briefly explain the proof of Theorem~\ref{GeneralVanThm}. Our main observation is the analogy of the statement of the theorem with the following classical phenomenon in \'etale sheaf theory. Given a flat morphism $X \to \mathrm{Spec}(V)$ over a dvr $V$ with generic and special fibres $j:X_\eta \subset X$ and $i:X_s \subset X$ respectively, one has the functor $i^* j_*:D(X_\eta) \to D(X_s)$ as well as its variant $\Psi:D(X_{\overline{\eta}}) \to D(X_{\overline{s}})$ (called the nearby cycles functor) for the base change $X_{\overline{V}}$ to the integral closure $\overline{V}$ of $V$ in an algebraic closure $\overline{\eta}$ of $\eta$. It is a basic fact that $\Psi$ has better cohomological properties than $i^* j_*$; for example, $\Psi$ is perverse $t$-exact (under appropriate normalizations), while $i^* j_*$ has perverse cohomological dimension $1$. Looking at the proof, this better behaviour is essentially a reflection of the infinite ramification of $X_{\overline{V}} \to X$ along the divisor $X_s \subset X$. We observe (Proposition~\ref{nearby}) that similar behaviour also occurs in the global setup of Theorem~\ref{GeneralVanThm}, which then facilitates an argument via induction on dimension to prove Theorem~\ref{PnVanishingIntro}.

\subsection*{Acknowledgements}
I am grateful to H\'el\`{e}ne Esnault, Martin Olsson, Emanuel Reinecke, Peter Scholze, Jakob Stix and the anonymous referee for their comments and encouragement.  During the preparation of this note (which dates back to the summer of 2020), I was partially supported by the NSF (\#1801689, \#1840234, \#1952399), Packard and Simons foundations.

\section{Int-varieties}
\label{sec:IntVar}

In this section, we simply observe that passage to the direct limit yields a  reasonable theory of semiperverse sheaves on schemes that are integral over finite type $k$-schemes; especially, the connective part of the perverse $t$-structure behaves almost exactly as well as the classical case.

\begin{definition}
A qcqs $k$-scheme $X$ is called a {\em int-variety} if there exists an integral $k$-morphism $X \to Y$ with $Y$ a scheme of finite type over $k$. 
\end{definition}

All int-varieties of interest to us arise as follows:

\begin{example}
Let $Y$ be an integral $k$-variety, and let $L/K(Y)$ be an algebraic extension of fields. Then the normalization $X$ of $Y$ in $L$ is an int-variety. 
\end{example}

In the language of \cite{HamacherCompactSupport}, int-varieties are exactly the $k$-schemes of finite expansion. 

\begin{lemma}
\label{rmk:intpres}
Say $X$ is an int-variety. Then we can write $X = \lim_i X_i$ as a cofiltered inverse limit of finite type $k$-schemes with the transition maps being finite surjective. Moreover, for any such presentation, we have $\dim(X) = \dim(X_i)$ for all $i$. 
\end{lemma}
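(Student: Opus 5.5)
We use the standard approximation results for integral morphisms, then handle dimension via going-up. Let $f: X \to Y$ be integral with $Y$ of finite type over $k$. Since $X$ is qcqs, $f_*\mathcal{O}_X$ is a quasi-coherent $\mathcal{O}_Y$-algebra that is integral. It is therefore the filtered colimit of its finite (finitely generated, hence finite since integral) quasi-coherent $\mathcal{O}_Y$-subalgebras $\mathcal{A}_\lambda$; this is the standard fact, e.g.\ Stacks Project, that an integral quasi-coherent algebra over a qcqs scheme is the union of its finite subalgebras. Setting $X'_\lambda = \underline{\mathrm{Spec}}_Y(\mathcal{A}_\lambda)$ gives $X = \lim_\lambda X'_\lambda$. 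Each $X'_\lambda$ is finite over $Y$, hence of finite type over $k$, and the transition maps are finite since they are maps between schemes finite over $Y$.

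To make the transition maps surjective, replace $X'_\lambda$ by the scheme-theoretic image $X_\lambda$ of $X \to X'_\lambda$. This image is a closed subscheme, so it is still of finite type and finite over $Y$, and $X = \lim_\lambda X_\lambda$ still holds: the closed immersions $X_\lambda \subset X'_\lambda$ are cofinal-compatible, and one checks on affines that the colimit of the quotient rings is still $\mathcal{O}(X)$, because the kernel of $\mathcal{A}_\lambda \to f_*\mathcal{O}_X$ is zero anyway. In fact the $\mathcal{A}_\lambda$ are subalgebras, so $X \to X'_\lambda$ is already schematically dominant. Since $X \to X_\lambda$ is integral, its image is closed, and it is dense by schematic dominance (on affines, $A_\lambda \subset A$ injective and integral gives surjectivity on $\mathrm{Spec}$ by lying over). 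So $X \to X_\lambda$ is surjective, and hence every transition map $X_\mu \to X_\lambda$ is surjective too.

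For the dimension claim, fix any presentation $X = \lim X_i$ with finite surjective transitions. The maps $X \to X_i$ are then integral, and they are surjective because a limit of nonempty finite fibers is nonempty: the fiber over a point is a cofiltered limit of nonempty finite sets, i.e.\ of spectra of nonempty finite algebras, so it is nonempty. An integral surjective morphism preserves dimension. Indeed, $\dim X \le \dim X_i$ because integral maps have zero-dimensional fibers and chains of primes lying over a chain can't collapse (incomparability). Conversely, $\dim X \ge \dim X_i$ by going up, which lifts any chain in $X_i$ using surjectivity and lying over. The same argument applied to $X_j \to X_i$ shows that all the $\dim X_i$ agree.

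The main obstacle is the bookkeeping in the first step, namely ensuring that the limit of the $X_\lambda$ recovers $X$ and not something larger. Working with subalgebras of $f_*\mathcal{O}_X$ rather than arbitrary finite-type approximations avoids this, and qcqs-ness is what makes the colimit description of $f_*\mathcal{O}_X$ valid.
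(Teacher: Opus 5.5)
Your proof is correct and follows essentially the same route as the paper. You write $f_*\mathcal{O}_X$ as the filtered union of its finite $\mathcal{O}_Y$-subalgebras and take relative $\mathrm{Spec}$, then get the dimension equality from incomparability (no nontrivial specializations in fibres) and going up. You also spell out two points the paper leaves implicit: surjectivity of the transition maps, via lying over for $\mathcal{A}_\lambda \subset \mathcal{A}$, and surjectivity of $X \to X_i$ for an arbitrary presentation, via nonemptiness of cofiltered limits of nonempty finite fibres. The scheme-theoretic-image detour is unnecessary, as you yourself observe.
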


We shall call such a presentation an {\em int-presentation} of $X$. Note that all presentations of $X$ as a cofiltered limit of finite type $k$-schemes are pro-equivalent to each other.

\begin{proof}
For the first part, say $f:X \to Y$ is an integral map with $Y$ a finite type $k$-scheme. Consider the quasicoherent sheaf of algebras $\mathcal{A} := f_* \mathcal{O}_X$ on $Y$, so $X=\mathrm{Spec}_Y(\mathcal{A})$. As $Y$ is noetherian and $\mathcal{A}$ is integral over $\mathcal{O}_Y$, we can write $\mathcal{A} = \cup_i \mathcal{A}_i$ as a filtering union of its $\mathcal{O}_Y$-coherent subalgebras. Setting $X_i = \mathrm{Spec}_Y(\mathcal{A}_i)$ gives the desired presentation.

For the second part, it suffices to show that if $f:X \to Y$ is a surjective integral map with $Y$ a $k$-scheme of finite type, then we have $\dim(X) = \dim(Y)$: indeed, the inequality $\leq$ follows as there cannot be any non-trivial specializations in the fibres of $f$, while the inequality $\geq$ follows from the going up theorem.
\end{proof}

\begin{remark}[\'Etale cohomological dimension of an int-variety]
\label{rmk:cohdim}
Say $X$ is an int-variety of Krull dimension $d$. Then $\Lambda$-cohomological dimension of $U_{et}$ for any affine open $U \subset X$ is $\leq d$. Indeed, by Lemma~\ref{rmk:intpres} and noetherian approximation, such a $U$ admits an int-presentation $U = \lim_i U_i$ with each $U_i$ being an affine $k$-scheme of finite type of dimension $\leq d$; the claim then follows from Artin vanishing on each $U_i$ coupled with the observation that any constructible sheaf $F$ of $\Lambda$-modules on $U$ is pulled back from some $U_i$. A similar argument shows that the $\Lambda$-cohomological dimension of $X$ itself is $\leq 2d$.
\end{remark}

\begin{construction}[Semiperverse complexes on int-varieties]
\label{SemiPervComp}
Any int-variety $X$ has finite Krull dimension by Lemma~\ref{rmk:intpres}, so one has a well-defined function $p_X:X \to \mathbf{Z}$ given by $p_X(x) = -\dim(\overline{\{x\}})$. In fact, given an integral map $f:X \to Y$, we have $p_X = p_Y \circ f$: this amounts to observing that if $A \subset B$ is an integral extension of integral domains, then $\dim(A) = \dim(B)$. This observation implies that $p_X$ is a strong perversity function in the sense of \cite[\S 1]{Gabbertstructures}, so one has an induced perverse $t$-structure on $D(X)$ by \cite[\S 6]{Gabbertstructures}. Explicitly, if $X_{geom}$ denotes the set of isomorphism classes of geometric points $x \to X$, then we have
\[ {}^p D^{\leq 0}(X) = \{K \in D(X) \mid K_x \in D^{\leq p_X(x)} \ \forall x \in X_{geom}\} \]
and 
\[ {}^p D^{\geq 0}(X) = \{K \in D(X) \mid K \in D^+, \ R\Gamma_x(K_x) \in D^{\geq p_X(x)} \ \forall x \in X_{geom} \}, \]
where $K_x$ denotes the stalk at $x$ and $R\Gamma_x(K_x) := R\Gamma_x(\mathrm{Spec}(\mathcal{O}_{X,x}^{sh}), K_x)$ denotes the local \'etale cohomology (or the ``costalk'') at $x$. We shall refer to ${}^p D^{\leq 0}(X) \subset D(X)$ as the full subcategory of {\em semiperverse} complexes. Functors that preserve this subcategory are called perverse right $t$-exact.
\end{construction}

\begin{remark}
In this paper, we shall only use the notion of semiperverse complexes, and not the full strength of the perverse $t$-structure.
\end{remark}

Semiperverse complexes have some obvious stability properties, e.g., they are preserved by pullback along open/closed immersions as well as pushforward along closed immersions. In our eventual application, we shall need the following int-variant of the standard stability property of semiperversity under finite morphisms:

\begin{lemma}[Semiperversity and integral morphisms]
\label{PushSemiperverse}
Let $f:X \to Y$ be an integral morphism of int-varieties.
\begin{enumerate}
\item $f_*:D(X) \to D(Y)$ preserves semiperversity.
\item $f^*:D(Y) \to D(X)$ preserves semiperversity.
\item $f_*$ detects semiperversity, i.e., given $K \in D(X)$, if $f_* K \in {}^p D^{\leq 0}$, then $K \in {}^p D^{\leq 0}$.
\item If $f$ is surjective, then $f^*$ detects semiperversity. 
\end{enumerate}
\end{lemma}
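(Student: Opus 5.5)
The whole argument rests on one formula: the stalk of a pushforward along an integral map. For an integral morphism $f:X \to Y$ and a geometric point $y \to Y$, proper base change for integral morphisms (valid for unbounded complexes, since integral pushforward is exact on \'etale sheaves) gives
\[ (f_*K)_y \simeq R\Gamma(X \times_Y \mathrm{Spec}(\mathcal{O}^{sh}_{Y,y}), K) \simeq \bigoplus_{x} K_x .\]
Here $x$ runs over the geometric points of the fibre $X_y$ over $y$. This uses that $X\times_Y \mathrm{Spec}(\mathcal{O}^{sh}_{Y,y})$ is integral over a strictly henselian local ring. Such a scheme is a disjoint union (or, if the fibre is infinite, a profinite family) of strictly henselian local schemes. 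Hence $H^i$ is the space of locally constant sections $\colim$ over finite clopen decompositions of the stalk groups $H^i(K_x)$. In particular, $H^i((f_*K)_y)=0$ if and only if $H^i(K_x)=0$ for all $x$ in $X_y$, because $X_y$ is $0$-dimensional and profinite. This is the main technical point. If the cleanest route is needed, I would write $f$ as a limit of finite maps using Lemma~\ref{rmk:intpres}, so that $X\times_Y \mathrm{Spec}(\mathcal{O}^{sh}_{Y,y}) = \lim_i$ of finite disjoint unions of strictly henselian local schemes. Cohomology of a limit of qcqs schemes is then the colimit.

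The second ingredient is Construction~\ref{SemiPervComp}: for integral $f$ we have $p_X = p_Y\circ f$. Hence $p_X(x) = p_Y(y)$ whenever $x$ lies over $y$.

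With these two facts, all four parts follow by direct stalk checks.
\begin{enumerate}
\item Suppose $K$ is semiperverse. For each $x$ over $y$, $K_x \in D^{\leq p_X(x)} = D^{\leq p_Y(y)}$. The vanishing criterion above then gives $(f_*K)_y \in D^{\leq p_Y(y)}$, so $f_*K$ is semiperverse.
\item For $L$ on $Y$, $(f^*L)_x = L_{f(x)} \in D^{\leq p_Y(f(x))} = D^{\leq p_X(x)}$, so $f^*L$ is semiperverse.
\item Every geometric point $x$ of $X$ lies over $y=f(x)$. If $(f_*K)_y \in D^{\leq p_Y(y)}$, the same stalk formula forces each $K_x$, for $x$ over $y$, to lie in $D^{\leq p_Y(y)} = D^{\leq p_X(x)}$. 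So $K$ is semiperverse.
\item Let $y$ be a geometric point of $Y$. Since $f$ is surjective, the fibre over $y$ has a geometric point $x$ mapping to $y$ (using that $k(y)$ is separably closed and residue field extensions are algebraic; after passing to algebraic closures, the stalks agree). Then $L_y = (f^*L)_x \in D^{\leq p_X(x)} = D^{\leq p_Y(y)}$, so $L$ is semiperverse.
\end{enumerate}

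The only step requiring care is the stalk formula for non-finite integral $f$ with unbounded $K$. I would handle it via the int-presentation and the compatibility of \'etale cohomology with cofiltered limits of qcqs schemes along affine transition maps. Both $\colim$ and $D^{\leq n}$ conditions are checked on cohomology sheaves, so unboundedness causes no trouble.
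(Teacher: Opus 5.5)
Your proposal is correct and follows essentially the same route as the paper: both compute $(f_*K)_y$ as the cohomology of $X \times_Y \mathrm{Spec}(\mathcal{O}^{sh}_{Y,y})$, which is a profinite family of strictly henselian local schemes indexed by the fibre. Both then use that a sheaf on a profinite set has vanishing global sections iff all its stalks vanish, together with $p_X = p_Y \circ f$, to get all four parts.

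The one thing to fix is your displayed formula $\bigoplus_x K_x$, which is only right for finite fibres. In general the paper writes $(f_*K)_y = R\Gamma(S, g_*(K|_{\widetilde{X_y}}))$, where $S$ is the profinite set of connected components. It then proves the vanishing criterion you state using exactness of $\Gamma(S,-)$ and the clopen splitting $F(S) = F(U)\times F(S\setminus U)$.
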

\begin{proof}
Let us first make some general remarks. As $f$ is integral, we have $p_X = p_Y \circ f$ as in Construction~\ref{SemiPervComp}. Moreover, for any geometric point $y \to Y$, the qcqs scheme $\widetilde{X_y} := X \times_Y \mathrm{Spec}(\mathcal{O}_{Y,y}^{sh})$, being integral over the strictly henselian local scheme $\widetilde{Y_y} := \mathrm{Spec}(\mathcal{O}_{Y,y}^{sh})$, is acyclic (i.e., each connected component is a strictly henselian local scheme) and its profinite set of connected components identifies with its set of closed points which identifies with the profinite set $S := |f^{-1}(y)|$. Write $g:\widetilde{X_y} \to S$ for the natural projection onto the profinite set of connected components. In these terms, the functor $K \mapsto (f_* K)_y$ on $D(X)$ identifies with $K \mapsto R\Gamma(S, g_* (K|_{\widetilde{X_y}}))$; furthermore, if $x \in X$  is a geometric point above $y$, then $x$ can be regarded as a point of $S$, and the functor $K \mapsto K_x$ on $D(X)$ is identified with the functor $K \mapsto (g_* (K|_{\widetilde{X_y}}))_x$, 

\begin{enumerate}
\item Fix $K \in {}^p D^{\leq 0}(X)$ and a geometric point $y \to Y$. Write $p = p_Y(y)$. We must show that $(f_* K)_y \in D^{\leq p}$. As above, we can identify $(f_* K)_y = R\Gamma(S, g_* (K|_{\widetilde{X_y}}))$; we must show this lies in $D^{\leq p}$.  For any geometric point $x \to X$ above $y$, we have $K_x \in D^{\leq p}$ since $p_X = p_Y \circ f$. By the remarks above, we then have $g_*(K|_{\widetilde{X_y}}) \in D^{\leq p}(S)$ as well. The desired claim now follows from the fact that $R\Gamma(S,-)$ is $t$-exact as any open cover of $S$ has a refinement by a clopen cover. 

\item This follows by noting that $(f^* K)_x= K_{f(x)}$ and $p_Y(f(x)) = p_X(x)$ for any geometric point $x \to X$. 

\item Fix a geometric point $x \to X$ and $K \in D(X)$ such that $f_* K \in {}^p D^{\leq 0}(Y)$. We must show that $K_x \in D^{\leq p_X(x)}$. Write $y \to Y$ for the image of $x$ and let $M := g_* (K|_{\widetilde{X_y}})$ be as in the remarks above.  We then know that $K_x$ is a stalk of $M$ while $(f_* K)_y = R\Gamma(S,M)$. Also, we have $p_Y(y) = p_X(x)$. As $S$ is a profinite set, the functor $R\Gamma(S,-)$ is exact as in (1).  It is thus enough to show that if $F$ is a sheaf of abelian groups on $S$ with $F(S) = 0$, then $F = 0$. But if $F \neq 0$, then there exists some clopen $U \subset S$ with $F(U )\neq 0$. Since $F(S) = F(U) \times F(S-U)$, we learn that $F(S) \neq 0$, as wanted. 

\item This follows by the reasoning in (2), as every geometric point $y \to Y$ has a lift to $X$.  \qedhere
\end{enumerate}
\end{proof}

\begin{remark}[Approximating constructible and semiperverse sheaves on int-varieties]
For psychological comfort, we recall a direct description of $D(X)$ and ${}^p D^{\leq 0}(X)$ for an int-variety $X$ in terms of finite type $k$-schemes. For the former, note that any qcqs $U \in X_{et}$ has finite \'etale cohomological dimension by Artin's theorem (Remark~\ref{rmk:cohdim}), so taking the colimit induces an equivalence
\[ \colim: \mathrm{Ind}(D^b_{cons}(X,\Lambda)) \simeq D(X)\]
by \cite[\S 6.5]{BhattScholzeProetale}.
Furthermore, given an int-presentation $X = \lim_i X_i$, we have $D^b_{cons}(X) = \colim_i D^b_{cons}(X_i)$, whence there is an induced equivalence 
\[ \mathrm{Ind}(\colim D^b_{cons}(X_i)) \simeq \mathrm{Ind}(D^b_{cons}(X)) \simeq D(X)\]
with the composite given by taking the colimit. We claim that this restricts to an equivalence 
\[ \mathrm{Ind}(\colim_i {}^p D^{\leq 0}_{cons}(X_i)) \simeq {}^p D^{\leq 0}(X).\]
As semiperversity is preserved under pullback along integral morphisms, we have a fully faithful map from the left to the right in the displayed formula above. To show this an equivalence, fix $F \in {}^p D^{\leq 0}(X)$. If $f_i:X \to X_i$ denotes the projection, then we have a natural equivalence 
\[ \colim_i f_i^* f_{i,*} F \to F.\] 
It is thus enough to note that $f_{i,*} F \in {}^p D^{\leq 0}(X_i)$ by Lemma~\ref{PushSemiperverse} (1).
\end{remark}

\section{Infinitely ramified maps}

In this section, we introduce the notion of an infinitely ramified map. While one could define this notion in an elementary fashion, we have chosen a formulation in terms of infinite root stacks for conciseness and later convenience; this also allows us to obtain statements close to the language of log geometry without explicitly introducing the latter (see \cite{TalpoVistoli} for a more substantial discussion).

\subsection{Infinite root stacks}

In this subsection, we recall the notion of an infinite root stacks of a log scheme in the special case where the log structure is generated by a single divisor. Instead of using the language of log structures, we formulate statements in terms of generalized Cartier divisors; this generalization of the notion of an effective Cartier divisor has better stability properties and has been studied a number of times in the literature, perhaps under slightly different names such as virtual Cartier divisors, pseudodivisors, or just ``divisors'' (with the quotes being part of the name); see \cite[\S 3]{FultonIntersectionTheory} \cite[\S 3]{KhanRydhVirtualCar} for more.

\begin{definition}[Generalized Cartier divisor]
Let $X$ be a $k$-stack. A {\em generalized Cartier divisor} on $X$ consists of a pair $(L,s)$, where $L$ is an invertible $\mathcal{O}_X$-module and $s:L \to \mathcal{O}_X$ is an $\mathcal{O}_X$-linear map. Write $\mathrm{GCart}(X)$ for the groupoid of generalized Cartier divisors; note that the full subgroupoid $\mathrm{Cart}(X) \subset \mathrm{GCart}(X)$ consisting of pairs $(L,s)$ with $s$ injective identifies with the set of effective Cartier divisors on $X$. 
\end{definition}

When $X$ is integral, which is the case for our applications, any $(L,s) \in \mathrm{GCart}(X)$ with $s \neq 0$ is effective. However, generalized Cartier divisors have the advantage of being stable under arbitrary pullbacks (unlike classical Cartier divisors), so we can regard $\mathrm{GCart}(-)$ as a presheaf on $k$-stacks. 

\begin{remark}[The universal generalized Cartier divisor]
\label{rmk:univ}
The presheaf $\mathrm{GCart}(-)$ is representable by $\mathbf{A}^1/\mathbf{G}_m$, with the universal generalized Cartier divisor corresponding to the tautological map $\mathcal{O}(-1) \to \mathcal{O}$ on $\mathbf{A}^1/\mathbf{G}_m$. This follows by unwinding the definition of $\mathbf{A}^1/\mathbf{G}_m$ in terms of $\mathbf{G}_m$-torsors equipped with a function.
\end{remark}

\begin{remark}[The monoid structure on $\mathrm{GCart}(-)$]
There is a natural multiplication on $\mathrm{GCart}(-)$, given on local sections by $(L,s) \otimes (M,t) = (L \otimes M, s \otimes t)$. In fact, this corresponds to the commutative monoid stack structure on $\mathbf{A}^1/\mathbf{G}_m$ determined by multiplication on $\mathbf{A}^1$ and $\mathbf{G}_m$. In particular, it makes sense to ask for an $n$-th root of a point $(L,s) \in \mathrm{GCart}(S)$: it is a lift of $(L,s)$ along the $n$-power map $\mathrm{GCart}(S) \to \mathrm{GCart}(S)$. 
\end{remark}

Taking the inverse limit of the multiplication by $p^n$ maps on the monoid $\mathrm{GCart}(-)$ then leads to the following construction:

\begin{construction}[Infinite root stacks]
Let $X$ be a stack on $k$-algebras equipped with a generalized Cartier divisor $D := (L,s) \in \mathrm{GCart}(X)$. By Remark~\ref{rmk:univ}, the pair $(L,s)$ is determined by its classifying map $X \to \mathbf{A}^1/\mathbf{G}_m$. Set $\widetilde{\mathbf{A}^1} := \lim_{x \mapsto x^p} \mathbf{A}^1$ and $\widetilde{\mathbf{G}_m} := \lim_{x \mapsto x^p} \mathbf{G}_m$, and define the stack $X[D^{1/p^\infty}]$ via the fibre product diagram
\begin{equation}
\label{infroot}
 \xymatrix{ X[D^{1/p^\infty}] \ar[r] \ar[d] & \widetilde{\mathbf{A}^1}/\widetilde{\mathbf{G}_m} = \lim_p \mathrm{GCart}(-) \ar[d] \\
X \ar[r] &   \mathbf{A}^1/\mathbf{G}_m = \mathrm{GCart}(-).}
\end{equation}
Unwinding definitions, one learns that the left vertical map is the universal stack over $X$ equipped with a compatible system $\{L_n\}_{n \geq 1}$ of $p$-power roots of $L_0 := \pi^* L$ as well as a compatible system $\{s_n \in H^0(X[D^{1/p^\infty}],L_n^{-1})\}_{n \geq 1}$ of $p$-power roots of the canonical section $s_0  \in H^0(X[D^{1/p^\infty}], L_0^{-1})$ defining the pullback of $D$. Some features of this construction are:

\begin{enumerate}
\item Base change: The construction $(X,D) \mapsto X[D^{1/\infty}]$ commutes with base change on $X$, i.e., if $f:Y \to X$ is a map and $D_Y = f^* D \in \mathrm{GCart}(Y)$ is the pullback of $D$, then there is a natural identification $Y[D_Y^{1/p^\infty}] \simeq X[D^{1/p^\infty}] \times_X Y$.

\item Flatness:  The structure map $X[D^{1/p^\infty}] \to X$ is faithfully flat. In particular, if $(L,s)$ is an effective Cartier divisor, the same holds true for the $p^n$-th root $(L_n,s_n) \in \mathrm{GCar}(X[D^{1/p^\infty}])$.
\end{enumerate}
\end{construction}

\begin{remark}[The infinite root stack as a modification]
Let $X$ be a stack on $k$-algebras equipped with a generalized Cartier divisor $D := (L,s) \in \mathrm{GCart}(X)$. If the section $s$ is invertible, then $X[D^{1/p^\infty}] \to X$ is an isomorphism: the right vertical map in diagram \eqref{infroot} is an isomorphism over $\mathbf{G}_m/\mathbf{G}_m \subset \mathbf{A}^1/\mathbf{G}_m$. On the other hand, if $s=0$, then $X[D^{1/p^\infty}] \to X$ can be identified with a locally nilpotent thickening of the $\mathbf{Z}_p(1)$-gerbe $X' \to X$ of $p$-power roots of the line bundle $L$ (also defined as the image of $L$ under the boundary map of the extension
\[ \mathbf{Z}_p(1) \to \widetilde{\mathbf{G}_m} \to \mathbf{G}_m\]
of group schemes). Thus, in general, we can view $X[D^{1/p^\infty}] \to X$ as a modification of $X$, which is an isomorphism over $U=X-V(s)$ and has (reduced) fibres $B\mathbf{Z}_p(1)$ over $V(s)$. Thus, it can roughly be viewed as an algebro-geometric analog of the real-analytic blowup of $X$ along $D$ when $X$ is a complex variety and $D$ is effective.
\end{remark}

\begin{example}[The case of a principal divisor]
\label{rootstackkummertorsor}
Let $X=\mathbf{A}^1 = \mathrm{Spec}(k[x])$  equipped with the effective Cartier divisor $V(x) \subset X$, corresponding to $(\mathcal{O},x) \in \mathrm{GCart}(X)$. Then $X[D^{1/p^\infty}]$ is identified with $\widetilde{\mathbf{A}^1}/\mathbf{Z}_p(1)$, where $\widetilde{\mathbf{A}^1} = \lim_{x \mapsto x^p} \mathbf{A}^1 = \mathrm{Spec}(k[x^{1/p^\infty}])$ as above and $\mathbf{Z}_p(1) := \lim_n \mu_{p^n} = \ker(\widetilde{\mathbf{G}_m} \to \mathbf{G}_m)$. The resulting  map $X[D^{1/p^\infty}] \to B\mathbf{Z}_p(1)$ classifies a $\mathbf{Z}_p(1)$-torsor $T \to X[D^{1/p^\infty}]$. Over the open subset $U=X-V(x) \subset X[D^{1/p^\infty}]$, the torsor $T|_U \to U$ is simply the Kummer torsor associated to the invertible function $x \in \mathcal{O}(U)$.

More generally, given any $k$-scheme $X$ with a function $f:X \to \mathbf{A}^1$ corresponding to $D=(\mathcal{O},f) \in \mathrm{GCart}(X)$,  a similar analysis shows that $X[D^{1/p^\infty}]$ comes equipped with canonical a $\mathbf{Z}_p(1)$-torsor extending the Kummer torsor for the invertible function $f$ on $U=X-V(f)$. This extension is  a stacky version of (an instance of) Abhyankar's lemma: finite \'etale covers of $p$-power order over $U$ extend to finite \'etale covers of $X$ after adding $p$-power ramification along $V(f)$.
\end{example}

\begin{remark}
Given a generalized Cartier divisor $D := (L,s) \in \mathrm{GCart}(X)$, we obtain a closed subscheme $V(s) \subset X$; we shall sometimes also denote the latter by $D$ if no confusion arises. Note that if $D$ is effective, we can recover $(L,s)$ from $V(s)$, but not in general. In particular, both the closed subschemes $\emptyset \subset X$ and $X \subset X$ arise from this construction applied to $(\mathcal{O}_X,1)$ and $(L,0)$ (for any line bundle $L$) respectively. 
\end{remark}

\subsection{Infinitely ramified maps}

In this subsection, we study the following notion:

\begin{definition}[Infinitely ramified maps]
Let $X$ be a scheme equipped with a generalized Cartier divisor $D$. An integral map $f:Y \to X$ is said to be {\em infinitely $p$-ramified along $D$} if there exists a factorization of $f$ through the structure map $X[D^{1/p^\infty}] \to X$ (or equivalently that $f^* D \in \mathrm{GCart}(Y)$ admits a compatible system of $p$-power roots). 
\end{definition}

There are very few infinitely ramified maps between finite type $k$-schemes.

\begin{example}[The finite type case]
Let $X$ be finite type $k$-scheme equipped with an effective Cartier divisor $D$. Let $f:Y \to X$ be an integral (or equivalently finite) map of finite type $k$-schemes. Then $f$ is infinitely ramified along $D$ exactly when $f^* D$ equals $0$ or $Y$ as a generalized Cartier divisor on $Y$. Indeed, passing to local rings of $Y$, this amounts to the following observation (proven using Krull's intersection theorem): if $R$ is a noetherian local ring and $f \in R$ such that $(f)$ admits $n$-th roots for infinitely many integers $n$, then either $f=0$ or $f$ is a unit.
\end{example}

Thus, to obtain interesting examples, one must leave the world of finite type $k$-schemes. The basic local phenomenon is:

\begin{example}[$p$-power roots of functions]
Let $X$ be a $k$-scheme equipped with a function $f \in \mathcal{O}(X)$. Let $\pi:Y \to X$ be an integral map of $k$-schemes. If $\pi^*f$ admits a compatible system of $p$-power roots, then $\pi$ is infinitely $p$-ramified along $D=(\mathcal{O},f) \in \mathrm{GCart}(X)$.
\end{example}

Globally, one has the following examples, with the first one being most relevant to this note:

\begin{example}[The toric Frobenius tower]
Let $X=\mathbf{P}^n$ and let $D = \mathbf{P}^{n-1} \subset X$ be a co-ordinate hyperplane. Write $\phi:X \to X$ for the standard Frobenius lift, given by $[x_0,...,x_n] \mapsto [x_0^p,...,x_n^p]$ in homogeneous co-ordinates. Note that the pullback $\phi^* D$ admits a natural $p$-th root as an effective Cartier divisor. Iterating this observation shows that the projection map $\lim_\phi X \to X$ is infinitely $p$-ramified along $D$.  (More generally, there is a natural analog of this construction for any toric variety, but we do not spell it out here.)
\end{example}

\begin{example}[The level structure tower for the moduli of abelian varieties]
Fix a number $g \geq 1$.
Let $\mathcal{A}_g$ be the moduli stack of principally polarized abelian varieties of dimension $g$ and let $\mathcal{A}_g \subset \overline{\mathcal{A}_g}$ be a toroidal compactification as in \cite[\S 4]{FaltingsChai}. For each integer $n \geq 1$, we have a finite \'etale cover $\mathcal{A}_g[p^n] \to \mathcal{A}_g$ obtained by trivializing the $p^n$-torsion of the abelian variety.  Let $\overline{\mathcal{A}_g}[p^n]$ be the normalization of $\overline{\mathcal{A}_g}$ in this cover. Then $\{\overline{\mathcal{A}_g}[p^n]\}$ is a tower of finite surjective morphisms over $\overline{\mathcal{A}_g}$ that is \'etale over $\mathcal{A}_g$. On the other hand, if $D = \overline{\mathcal{A}_g} - \mathcal{A}_g$ denotes the complement, then $D$ is a divisor and this tower is highly ramified along $D$; in fact, by \cite[Theorem IV 6.7 (6)]{FaltingsChai}, the map $\lim_n \overline{\mathcal{A}_g}[p^n] \to \overline{\mathcal{A}_g}$ is infinitely $p$-ramified along each component of $D$.
\end{example}

\begin{example}[The level structure tower for the moduli of curves]
Fix a number $g \geq 1$.
Let $\mathcal{M}_g$ be the moduli stack of smooth genus $g$ curves, and let $\mathcal{M}_g \subset \overline{\mathcal{M}_g}$ be the compactification provided by the stack of stable genus $g$ curves. For each integer $n \geq 1$, we have a finite \'etale cover $\mathcal{M}_g[p^n] \to \mathcal{M}_g$ obtained by trivializing the $p^n$-torsion of the Jacobian.  Let $\overline{\mathcal{M}_g}[p^n]$ be the normalization of $\overline{\mathcal{M}_g}$ in this cover. Then $\{\overline{\mathcal{M}_g}[p^n]\}$ is a tower of finite surjective morphisms over $\overline{\mathcal{M}_g}$ that is \'etale over the open substack $\mathcal{M}_g^c \subset \overline{\mathcal{M}_g}$ of compact type stable curves. On the other hand, if $D = \overline{\mathcal{M}_g} - \overline{\mathcal{M}_g}^c$ denotes the complement, then $D$ is a divisor and this tower is highly ramified along $D$; in fact, by \cite[Lemma 5.11]{ReineckeMg}, the map $\lim_n \overline{\mathcal{M}_g}[p^n] \to \overline{\mathcal{M}_g}$ is infinitely $p$-ramified along each component of $D$.
\end{example}

\begin{example}[The absolute integral closure]
Let $X/k$ be a integral variety.. Let $f:X^+ \to X$ be its absolute integral closure, i.e., the integral closure of $X$ in an algebraic closure of its function field. For any $D \in \mathrm{GCart}(X)$, we claim that the map $f$ is infinitely $p$-ramified along $D$. It suffices to show that for one can find a finite cover of $X$ (which can always be dominated by $f$) where $D$ acquires a $p$-th root as an effective Cartier divisor; iterating this construction and pulling back the resulting data to $X^+$ then yields the desired compatible system of $p$-power roots of $D$. To find these roots, we first run the Bloch--Gieseker construction\footnote{The reference \cite{LazarsfeldPos1} assumes $X$ is quasi-projective to produce $n$-th roots of line bundles on finite covers. However, this is not necessary: the Picard group of $X^+$ is uniquely divisible by \cite[Lemma 6.6]{BhattCMRPlus}, so any line bundle on a $k$-variety acquires an $p$-th root over a sufficiently large finite cover.} to find a $p$-th root $L$ of $\mathcal{O}(D)$ over a finite cover, and then  the cyclic covering trick to produce a $p$-th root in $L$ of the defining section of $D$ after a further finite cover;  see \cite[\S 4.1.B]{LazarsfeldPos1} for both these tricks.
\end{example}

\section{Ramification implies vanishing}

In this section, we prove our main vanishing result for infinitely ramified maps.

\begin{notation}
Let $X$ be a finite type $k$-scheme equipped with an effective Cartier divisor $D$ with complement $U = X-D$. Let $f:Y \to X$ be an integral map that is infinitely $p$-ramified along $D$ (so $Y$ is an int-scheme). Write $i_Y:f^{-1}D \subset Y$ and $j_Y:f^{-1}U \subset Y$ for the natural inclusions. Note that $Y$, $f^{-1} U$ and $f^{-1} D$ are all int-varieties, so they support a well-behaved notion of semiperverse sheaves by the material in \S \ref{sec:IntVar}.
\end{notation}

Our main observation is the following global variant of semiperversity of nearby cycles:

\begin{proposition}[A consequence of the semiperversity of nearby cycles]
\label{nearby}
The functor
\[ i_Y^* \circ j_{Y,*}[-1]: D(f^{-1}U) \to D(f^{-1}D)\]
preserves semiperversity.
\end{proposition}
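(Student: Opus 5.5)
The plan is to check the claim on stalks, identify each stalk with a ``Milnor fibre'' computation, and bound that computation by the classical perverse right $t$-exactness of nearby cycles (Artin vanishing for Milnor fibres). Semiperversity on the int-variety $f^{-1}D$ is tested at geometric points (Construction~\ref{SemiPervComp}), and $p_{f^{-1}D}(y) = p_Y(y)$ for $y \in f^{-1}D$. So the statement amounts to the following: for $K \in {}^pD^{\leq 0}(f^{-1}U)$ and a geometric point $y \to f^{-1}D$,
\[ (i_Y^* j_{Y,*}K)_y = R\Gamma\bigl(\mathrm{Spec}(\mathcal{O}^{sh}_{Y,y}) \times_Y f^{-1}U,\, K\bigr) \in D^{\leq p_Y(y)+1}. \]

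First I would reduce to a principal divisor with honest roots. Everything is étale local on $X$, so I may assume $D = V(t)$ with $t \in \mathcal{O}(X)$ a nonzerodivisor. Since $f$ factors through $X[D^{1/p^\infty}]$, the function $f^*t$ admits compatible $p$-power roots up to compatible units. In the strictly henselian ring $\mathcal{O}^{sh}_{Y,y}$ units have all $p$-power roots ($p$ is invertible), so after adjusting $t$ by a unit I get a compatible system $t^{1/p^n}$ in $\mathcal{O}^{sh}_{Y,y}$. I then write $Y = \lim Y_i$ via an int-presentation (Lemma~\ref{rmk:intpres}). Constructible $K$ descends to some $Y_i$, and semiperversity passes along $f_{i,*}$ and $f_i^*$ by Lemma~\ref{PushSemiperverse}. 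Because both $D(Y)$ and ${}^pD^{\leq 0}(Y)$ are generated under filtered colimits by pullbacks of constructible semiperverse objects (the remark on approximation), and $i^* j_*$ commutes with filtered colimits (finite cohomological dimension, Remark~\ref{rmk:cohdim}), it suffices to treat $K$ pulled back from a finite type $Y_i$. Then $\mathrm{Spec}(\mathcal{O}^{sh}_{Y,y})$ lies over the pro-scheme
\[ \mathrm{Spec}\bigl(\mathcal{O}^{sh}_{Y_i,y}[T_n]_n/(T_n^{p^n}-t,\ T_{n+1}^p - T_n)\bigr), \]
with the $T_n$ mapping to the chosen roots. The extra integral extension from this Kummer tower up to $\mathcal{O}^{sh}_{Y,y}$ is handled by Lemma~\ref{PushSemiperverse}(1),(3), since it does not change $p$.

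The core step is the finite type statement. Let $Z$ be a finite type $k$-scheme with a function $t$, and let $Z_\infty = \lim_n Z[t^{1/p^n}]$. The claim is that for $K$ semiperverse on $Z[1/t]$, pulled back to $Z_\infty[1/t]$, the stalks of $i^* j_* K$ on $Z_\infty$ lie in $D^{\leq p+1}$. Since $\Lambda$ is $p$-power torsion, the cohomology of the punctured strict henselization along the Kummer $\mathbf{Z}_p(1)$-pro-torsor only sees the $p$-part of tame inertia. So this stalk is exactly the stalk of the (tame, $p$-part) nearby cycles $\Psi_t K$, computed as $R\Gamma$ of a Milnor fibre.

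The main obstacle is the required bound
\[ R\Gamma\bigl(\mathrm{Spec}(\mathcal{O}^{sh}_{Z,z}[t^{1/p^\infty}][1/t]),\, K\bigr) \in D^{\leq p_Z(z)+1}. \]
This is the semiperversity of $\Psi[-1]$. I would prove it as Illusie/Gabber do, via Gabber's affine Lefschetz theorem (relative Artin vanishing): $\mathrm{Spec}(\mathcal{O}^{sh}_{Z,z}[1/t])$ has cohomological dimension at most $\dim$ on semiperverse complexes up to the shift by one, and the same argument passes to the limit over the Kummer tower by finite-level approximation. Alternatively, I would cite perverse right $t$-exactness of $\Psi$ after spreading $t$ out to a map to a curve. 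The shift $[-1]$ appears precisely because in the infinitely ramified limit the extra degree contributed by the $\mathbf{Z}_p(1)$-torsor quotient is removed, unlike the bare $i^*j_*$.
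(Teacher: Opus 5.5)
There is a genuine gap, and it starts with the shift. Semiperversity of $i_Y^*j_{Y,*}K[-1]$ means $(i_Y^*j_{Y,*}K)_y[-1]\in D^{\le p_Y(y)}$, i.e.\ $(i_Y^*j_{Y,*}K)_y\in D^{\le p_Y(y)-1}$, not $D^{\le p_Y(y)+1}$.

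The bound you wrote holds with no ramification at all; indeed even $D^{\le p_Y(y)}$ does, since $j_Y$ is affine and Artin vanishing applies. Compare the paper's remark with $X=\mathbf{A}^1$ and $K=\Lambda[1]$. There the unramified stalk $\Lambda[1]\oplus\Lambda$ satisfies your bound but not the proposition. So the statement you reduce to is vacuous. The content of the proposition is exactly the gain of one degree beyond Artin vanishing. Your closing intuition that a degree is removed is right, but removing a degree moves the bound down, not up.

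This also breaks your primary argument for the core step. You apply affine Lefschetz to $\mathrm{Spec}(\mathcal{O}^{sh}_{Z,z}[t^{1/p^n}][1/t])$ at each finite Kummer level and then pass to the colimit. That can only transfer bounds valid at every finite level. The correct bound $D^{\le p_Z(z)-1}$ fails at every finite level, because the $H^1\simeq\Lambda(-1)$ of the punctured trait is present. That class dies only in the colimit, because pullback along $t\mapsto t^p$ acts on it by $p$.

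The missing input is the argument of Proposition~\ref{nearbysemi}:
\begin{enumerate}
\item Henselize $\mathbf{A}^1$ at $0$ to get a henselian trait $V$ with fraction field $K$.
\item Dévissage to constant sheaves on integral $Z$, and reduce to closed points by enlarging the residue field of $V$.
\item Apply Artin vanishing over $\overline{K}$ to the geometric Milnor fibre $\mathrm{Spec}(\mathcal{O}^{sh}_{Z,z})\otimes_V\overline{K}$. This is a limit of affine $\overline{K}$-schemes of dimension $\dim\mathcal{O}^{sh}_{Z,z}-1$, one less than the punctured strict localization, which is where the extra degree comes from.
\item Use that $\mathrm{Gal}(\overline{K}/K_\infty)$ is prime-to-$p$, so taking its invariants is exact on $\Lambda$-modules. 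This is exactly where infinite $p$-ramification is used.
\end{enumerate}

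Your fallback, citing perverse $t$-exactness of $\Psi[-1]$ with the generic-fibre perversity inherited from the total space, is what the paper does. It would close the gap once the bound is corrected and the passage from $\overline{K}$ to $K_\infty$ via prime-to-$p$ invariants is made explicit.

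Your reduction steps are a stalkwise version of the paper's route through the $\mathbf{Z}_p(1)$-torsor $Y'\to Y$ and the integral map $Y'\to\widetilde{X}$. These steps are compatible roots of $t$ in $\mathcal{O}^{sh}_{Y,y}$, descent to a finite-type $Y_i$, and the integral comparison $h$ with the Kummer tower. They are fine modulo details. You then need the core bound for all semiperverse complexes on the Kummer tower, such as $h_*h^*K$, not only for pullbacks from $Z[1/t]$. The relevant parts of Lemma~\ref{PushSemiperverse} are (2) and (1), not (3).
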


\begin{remark}
The assertion in Proposition~\ref{nearby} is clearly false without infinite $p$-ramification along $D$. For example, if $X=\mathbf{A}^1$ with $D$ being the origin, the functor $i^* j_*[-1]:D(U) \to D(D) \simeq D(\Lambda)$ carries $\Lambda[1]$ to the complex $R\Gamma(S^1,\Lambda) = \Lambda \oplus \Lambda[-1]$, which is not semiperverse on $D=\mathrm{Spec}(k)$. The effect of adding infinite ramification along $D$ is to kill the $H^1$ group appearing here.
\end{remark}

\begin{proof}
The statement is local on $X$, so we may assume that there exists a map $g:X \to \mathbf{A}^1 := \mathrm{Spec}(k[t])$ with $g^{-1}(0) = D$. Let $\widetilde{\mathbf{A}^1} := \mathrm{Spec}(k[t^{1/p^\infty}]) \to \mathbf{A}^1$ be mock perfection of $\mathbf{A}^1$. For any scheme $W/\mathbf{A}^1$, write $\widetilde{W} := W \times_{\mathbf{A}^1} \widetilde{\mathbf{A}^1} \to W$ denote the base change, so $\widetilde{\mathbf{G}_m} \to \mathbf{G}_m$ is the $\mathbf{Z}_p(1)$-torsor of $p$-power roots of $t$ (Remark~\ref{rootstackkummertorsor}). Thus, we obtain the diagram
\[ \xymatrix{ 
		  \widetilde{D} \ar[r]^-{i_{\widetilde{X}}} \ar[d]^-{\pi_D} & \widetilde{X} \ar[d]^-{\pi} & \widetilde{U} \ar[l]_{j_{\widetilde{X}}} \ar[d]^-{\pi_U} \\
		  D \ar[r]^-{i_X} \ar[d]^-{g_D} & X \ar[d]^-g & U \ar[l]_-{j_X} \ar[d]^-{g_U} \\
		  \{0\} \ar[r] & \mathbf{A}^1 & \mathbf{G}_m \ar[l] }\]
where all squares are cartesian and $\pi$ is integral. We shall first prove the proposition for $Y = \widetilde{X}$, and then make a series of reductions to this case.

Consider first the case $Y = \widetilde{X}$ and $f=\pi$. We must show that $i_{\widetilde{X}}^* \circ j_{\widetilde{X},*}[-1]:D(\widetilde{U}) \to D(\widetilde{D})$ preserves semiperversity. This assertion is local along $D$, so we can replace $\mathbf{A}^1$ with its henselization at $0$ (and pullback all objects appearing above to this henselization) to check the assertion. In this case, it  follows essentially from the classical theory of nearby cycles (see \cite[Proposition 4.4.2]{BBDG}, specifically \cite[4.4.4]{BBDG}); for completeness, we also review the argument in Proposition~\ref{nearbysemi} below.

Next, assume there exists an integral $X$-morphism $Y \to \widetilde{X}$. In this case, one can deduce the result for $Y$ from the result for $\widetilde{X}$ using proper base change and Lemma~\ref{PushSemiperverse} (1), (2) and (3).

Finally, we tackle the general case. By assumption on $Y \to X$, there exists an $X$-map 
\[ Y \to X[D^{1/p^\infty}] := X \times_{\mathbf{A}^l} \widetilde{\mathbf{A}^1}/\mathbf{Z}_p(1) \simeq X \times_{\mathbf{A}^1/\mathbf{G}_m} \widetilde{\mathbf{A}^1}/\widetilde{\mathbf{G}_m}.\] 
Thus, there is a $\mathbf{Z}_p(1)$-torsor $Y' \to Y$ equipped with an integral $X$-map $Y' \to \widetilde{X}$. By the previous reductions, we know the statement in question for $Y'$. We have a commutative diagram of $X$-schemes of the form
\[ \xymatrix{ D' \ar[r] \ar[d] & Y' \ar[d] & U' \ar[l] \ar[d] \\
		f^{-1} D \ar[r] & Y & f^{-1}U \ar[l] }\]
where all squares are cartesian, and the vertical maps are pro-(finite \'etale) covers. It is now easy to deduce the claim for $Y$ from that for $Y'$ using Lemma~\ref{PushSemiperverse} (4) as well as pro-smooth base change along $Y' \to Y$.
\end{proof}

\begin{corollary}
\label{nearby!restricti}
The functor 
\[ i_Y^!:D(Y) \to D(f^{-1}D)\]
preserves semiperversity. 
\end{corollary}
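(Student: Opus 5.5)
Take $K \in {}^p D^{\leq 0}(Y)$. The plan is to use the localization triangle for $i_Y$ and $j_Y$, namely
\[ i_Y^! K \to i_Y^* K \to i_Y^* j_{Y,*} j_Y^* K \to, \]
which is the standard triangle on $D(Y)$ for a closed immersion with open complement; on int-varieties it holds just as well. Rotating it gives
\[ i_Y^* j_{Y,*} j_Y^* K[-1] \to i_Y^! K \to i_Y^* K. \]
Since ${}^p D^{\leq 0}(f^{-1}D)$ is closed under extensions, it suffices to show that the two outer terms are semiperverse.

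\textbf{Right term.} We have $i_Y^* K \in {}^p D^{\leq 0}(f^{-1}D)$. The reason is that $i_Y$ is a closed immersion, hence integral, so this is an instance of Lemma~\ref{PushSemiperverse}(2). Concretely, stalks of $i_Y^*K$ at geometric points of $f^{-1}D$ are stalks of $K$, and $p_{f^{-1}D} = p_Y \circ i_Y$.

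\textbf{Left term.} Restriction to the open $f^{-1}U$ preserves semiperversity, because stalks are unchanged and the perversity function is compatible. Indeed $p_{f^{-1}U}(x) = -\dim \overline{\{x\}}$ computed in $f^{-1}U$, which agrees with $p_Y(x)$. This holds because closures in the int-variety $Y$ and in its open $f^{-1}U$ have the same dimension: both equal the transcendence degree of the residue field over $k$, as we see by passing to an int-presentation and using Lemma~\ref{rmk:intpres}. So $j_Y^*K$ is semiperverse. Proposition~\ref{nearby} then gives $i_Y^* j_{Y,*}(j_Y^*K)[-1] \in {}^p D^{\leq 0}(f^{-1}D)$.

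Combining the two terms finishes the argument. The only step needing care is the compatibility of the perversity function with passage to the open subset $f^{-1}U$; the paper only remarked on this informally, via stability under open pullback. I would justify it by the transcendence-degree description of $\dim \overline{\{x\}}$ for int-varieties, which is inherited from the finite type case via integrality (going up and incomparability, as in the proof of Lemma~\ref{rmk:intpres}). The rest is formal.
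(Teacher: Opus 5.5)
Your proof is correct and follows the paper's argument: you apply $i_Y^*$ to the localization triangle and rotate, then handle the right term $i_Y^*K$ directly from the stalk definition and the left term $i_Y^* j_{Y,*} j_Y^* K[-1]$ via Proposition~\ref{nearby}. Your extra check that $j_Y^*$ preserves semiperversity is sound. It amounts to the compatibility $p_{f^{-1}U} = p_Y|_{f^{-1}U}$, which you verify via transcendence degree and integrality, and it fills in a step the paper treats as one of the obvious stability properties.
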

\begin{proof}
For any $F \in D(Y)$, we have an exact triangle
\[ i_{Y,*} i_Y^! F \to F \to j_{Y,*} j_Y^* F.\]
Now assume $F \in {}^p D^{\leq 0}(Y)$. Applying $i_Y^*$ and rotating the above gives
\[ i_Y^* j_{Y,*} j_Y^* F[-1] \to i_Y^! F \to i_Y^* F.\]
The outer terms lie in ${}^p D^{\leq 0}(D)$: this is clear from the definition for the right side and follows from Proposition~\ref{nearby} for the left side. This implies that $i_Y^! F$ is semiperverse, as wanted.
\end{proof}

Using the above, we can now prove our main theorem:

%
%
%
%

\begin{proof}[Proof of Theorem~\ref{GeneralVanThm}]
First observe that the hypothesis in Theorem~\ref{GeneralVanThm} precisely ensures that $f$ is infinitely $p$-ramified over $D$, so we are in the setup of the current section. Moreover, we always have
\[ \mathrm{pcd}(Y) \geq \max(\mathrm{pcd}\left(f^{-1} D), \mathrm{pcd}(f^{-1} U)\right),\]
since the $*$-extension of a perverse sheaf on $f^{-1} D$ (resp. $f^{-1} U$) is a perverse sheaf on $Y$ for trivial reasons (resp. by Artin vanishing as $j_Y$ is affine). For the reverse inequality, given $F \in D(Y)$, we have an exact triangle
\[ i_{Y,*} i_Y^! F \to F \to j_{Y,*} j_Y^* F.\]
Now assume $F \in {}^p D^{\leq 0}(Y)$. Applying $R\Gamma(Y,-)$ to this triangle gives
\[ R\Gamma(f^{-1} D, i_Y^! F) \to R\Gamma(Y,F) \to R\Gamma(f^{-1} U, j_Y^* F).\]
By Corollary~\ref{nearby!restricti}, the functor $i_Y^!$ preserves ${}^p D^{\leq 0}$, so the above triangle yields  the desired inequality
\[ \mathrm{pcd}(Y) \leq \max(\mathrm{pcd}\left(f^{-1} D), \mathrm{pcd}(f^{-1} U)\right).\]
The last assertion simply follows as $\mathrm{pcd}(Y) \geq 0$ (Footnote~\ref{pcd} and Remark~\ref{rmk:cohdim}) and $\mathrm{pcd}(f^{-1} U) = 0$ when $U$ is affine (Remark~\ref{rmk:cohdim} and Artin vanishing).
\end{proof}

We can then deduce the desired statement for projective space:

\begin{theorem}
\label{PnVanishing}
Let $f:Y := \lim_{\phi} \mathbf{P}^n \to X = \mathbf{P}^n$ be the mock perfection of $\mathbf{P}^n$, where $\phi([x_0,...,x_n]) = [x_0^p,....,x_n^p]$. Then $R\Gamma(Y,-)$ is perverse right $t$-exact. 
\end{theorem}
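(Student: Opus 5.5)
We argue by induction on $n$, proving the slightly more general claim that $\mathrm{pcd}(Y_Z) = 0$ for every closed subscheme of the form $Z$ obtained as an intersection of coordinate hyperplanes inside $\mathbf{P}^n$. Here $Y_Z$ denotes the preimage of $Z$ in $Y = \lim_\phi \mathbf{P}^n$. Such a $Z$ is a coordinate linear subspace $\mathbf{P}^m$, and the restriction of $\phi$ to it is again the toric Frobenius lift of $\mathbf{P}^m$. Since $\phi$ is a universal homeomorphism on the underlying set of each coordinate stratum, we get $Y_Z \simeq \lim_\phi \mathbf{P}^m$, viewed as an int-variety over $\mathbf{P}^m$. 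So the statement to prove is simply $\mathrm{pcd}(\lim_\phi \mathbf{P}^m) = 0$ for all $m$, and this is precisely the assertion that $R\Gamma(Y,-)$ is perverse right $t$-exact. The base case $m=0$ is trivial: $Y$ is a point, or rather a limit of points along isomorphisms.

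For the inductive step, take $X = \mathbf{P}^n$ and let $D = V(x_0)$ be a coordinate hyperplane. This is an effective Cartier divisor, and it is ample, so $U = X - D \simeq \mathbf{A}^n$ is affine. By the Example on the toric Frobenius tower, $f: Y \to X$ is integral and surjective, and $f^*D$ admits a compatible system of $p$-power roots. So $f$ is infinitely $p$-ramified along $D$. The last assertion of Theorem~\ref{GeneralVanThm} then gives $\mathrm{pcd}(Y) = \mathrm{pcd}(f^{-1}D)$.

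It remains to identify $f^{-1}D$ and apply induction. Set-theoretically, $f^{-1}D = \lim_\phi \phi^{-1}(D)$, and the reduced structure of $\phi^{-1}(D)$ is $D$ itself. Since $\phi$ restricts to the toric Frobenius of $\mathbf{P}^{n-1}$ on $D$, we have $(f^{-1}D)_{red} \simeq \lim_\phi \mathbf{P}^{n-1}$. Étale sheaf theory, and hence the perverse $t$-structure and $\mathrm{pcd}$, is insensitive to nilpotent thickenings, which is topological invariance of the étale site. Therefore $\mathrm{pcd}(f^{-1}D) = \mathrm{pcd}(\lim_\phi \mathbf{P}^{n-1}) = 0$ by induction.

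The main point needing care is this identification of $f^{-1}D$ with the mock perfection of $\mathbf{P}^{n-1}$ up to a nilpotent thickening. To check it, we would work on the standard affine charts. On $\mathrm{Spec}\, k[x_0^{1/p^\infty}, \dots]$, setting $x_0 = 0$ kills $x_0$ but leaves the nilpotents $x_0^{1/p^k}$ modulo $x_0$. After reduction, only the $p$-power-root tower in the remaining variables survives, and this is exactly $\lim_\phi \mathbf{P}^{n-1}$ on that chart.

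Everything else is a formal application of Theorem~\ref{GeneralVanThm}, together with the observation from Footnote~\ref{pcd} that $\mathrm{pcd} \geq 0$ for nonempty int-varieties. That observation ensures the equality $\mathrm{pcd}(Y) = \mathrm{pcd}(f^{-1}D)$ is the right one. Finally, Theorem~\ref{PnVanishingIntro} follows because $R\Gamma(Y, f^*F) = \colim_i H^*(\mathbf{P}^n, (\phi^i)^*F)$, and because a sheaf supported in dimension $d$ lies in ${}^pD^{\leq d}$.
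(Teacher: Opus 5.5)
Your proof is correct and is essentially the paper's argument: you induct on $n$, apply Theorem~\ref{GeneralVanThm} to a coordinate hyperplane $D$ with affine complement $\mathbf{A}^n$ to get $\mathrm{pcd}(Y)=\mathrm{pcd}(f^{-1}D)$, and conclude by induction, since $(f^{-1}D)_{\mathrm{red}}$ is the mock perfection of $\mathbf{P}^{n-1}$ and the \'etale site is insensitive to nilpotent thickenings. Your first paragraph is superfluous and its justification is misworded ($\phi$ is not a universal homeomorphism on coordinate strata; on the open torus of $\mathbf{P}^m$ it is finite \'etale of degree $p^m$), but the fact you actually use later, namely $\phi^{-1}(Z)_{\mathrm{red}} = Z$ for a coordinate linear subspace $Z$, is correct.
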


\begin{proof}
We prove the claim by induction on $n$. The $n=0$ is case is clear, so assume $n \geq 1$. Let $D = \mathbf{P}^{n-1} \subset X$ be the hyperplane at infinity (corresponding to $x_n=0$) with complement $j_X:U = \mathbf{A}^n \subset X$. The map $f:Y \to X$ and the divisor $D \subset X$ satisfy the hypothesis of Theorem~\ref{GeneralVanThm}. As $\mathrm{pcd}(f^{-1} U) = 0$ (since $U$ is affine, see Remark~\ref{rmk:cohdim}), it suffices to show that $\mathrm{pcd}(f^{-1}D) = 0$; this follows by induction since $(f^{-1}D)_{red} \to D$ is the mock perfection for $\mathbf{P}^{n-1}$.
\end{proof}

Note that Theorem~\ref{PnVanishing} implies Theorem~\ref{PnVanishingIntro}: apply the former to the pullback of $F$ to $Y$. In fact, as any semiperverse complex on $Y$ can be built via colimits by this construction (possibly applied to a different copy of $X$ appearing in the limit defining $Y$), the two theorems are equivalent. 

The essential input in the proofs above was the semiperversity of nearby cycles, \cite[Proposition 4.4.2]{BBDG}. For completeness, we review the argument below.

\begin{proposition}[Semiperversity of vanishing cycles]
\label{nearbysemi}
Let $V$ be a henselian discrete valuation ring over $k$ with residue field $k$, fraction field $K$, and uniformizer $t \in V$. Let $K_\infty = K[t^{1/p^\infty}]$. Let $X/V$ be a flat finitely presented $V$-scheme with special and generic fibres $X_s$ and $X_\eta$. Fix a perverse sheaf $F$ on $X_\eta$.  Then for any  geometric point $x \to X_s$, we have
\[ R\Gamma(X^{sh}_x \otimes_V K_\infty, F) \in D^{\leq p(x)},\]
where $p(x) = -\dim(\overline{\{x\}})$.
\end{proposition}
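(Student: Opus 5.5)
We follow the classical argument of \cite[4.4]{BBDG}: Artin vanishing for the geometric Milnor fibre, plus the observation that adjoining all $p$-power roots of $t$ kills what is left of the monodromy. Let $X^{sh}_x$ be the strict henselization of $X$ at $x$, and write $M_\infty := X^{sh}_x \otimes_V K_\infty$ and $M := X^{sh}_x \otimes_V K$. Let $\overline{K}$ be a separable closure of $K$ containing $K_\infty$, and set $\overline{M} := X^{sh}_x \otimes_V \overline{K}$. Since $k$ is algebraically closed and $V$ is henselian with residue field $k$, the field $K$ has tame inertia $\prod_{\ell \neq \mathrm{char}\, k} \mathbf{Z}_\ell(1)$ together with a pro-$\mathrm{char}(k)$ wild part. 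The extension $\overline{K}/K_\infty$ is Galois with group $G$ having no pro-$p$ quotient: its tame part is the prime-to-$p$ product $\prod_{\ell\ne p}\mathbf{Z}_\ell(1)$, and its wild part is pro-$\mathrm{char}(k)$ with $\mathrm{char}(k)\neq p$. Because $\Lambda$ has $p$-power order, $G$ has $\Lambda$-cohomological dimension $0$ on $p$-primary torsion modules. Hochschild--Serre along $\overline{M} \to M_\infty$ then gives
\[ R\Gamma(M_\infty,F) \simeq R\Gamma(G, R\Gamma(\overline{M},F)) \simeq R\Gamma(\overline{M},F)^{G}, \]
with the invariants functor exact. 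This reduces the proposition to showing $R\Gamma(\overline{M},F)\in D^{\leq p(x)}$, where the right side is the stalk at $x$ of the nearby cycles $\Psi F$.

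So the key step is the classical bound on the geometric Milnor fibre. I would prove it by induction on $\dim \overline{\{x\}}$, following \cite[4.4.2]{BBDG}. The normalization of perversity is the one for $X_\eta$ of relative dimension $d$, where $p$ on $X_\eta$ is computed so that $p_X(x)=p_{X_s}(x)$ for points of the special fibre.

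First suppose $x$ is a closed point. Then $\overline{M}$ is a limit of affine schemes of finite type over $\overline{K}$-algebras; more precisely, it is the generic fibre of an affine local scheme. Gabber's / Artin's affine Lefschetz theorem for such ``Milnor fibres'' (\cite[4.1.1 and XIV]{BBDG}, SGA4 XIV) shows that $R\Gamma(\overline{M},-)$ has perverse cohomological dimension $\leq 0$ with the dimension function $p$. Concretely, it carries ${}^pD^{\leq 0}$ into $D^{\leq 0}$.

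For a non-closed $x$ with $\overline{\{x\}}$ of dimension $e$, I would cut by $e$ general hypersurfaces through a closed specialization, or equivalently use the local version: the strict localization at $x$ behaves like a closed point of a scheme of dimension $\dim X - e$. Perversity shifts by $e$ under this restriction, which yields the bound $D^{\leq -e}$.

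I expect the main obstacle to be the Artin vanishing for the nonfinite-type scheme $\overline{M}$, the affine Lefschetz theorem for strictly local schemes with a divisor removed after an infinite algebraic base extension. I would invoke Gabber's version as used in \cite[4.4]{BBDG}, via approximation by finite extensions of $K$ and passage to the colimit. I would also need to check carefully that the dimension bookkeeping gives $p(x)$ and not $p(x)+1$; the shift $[-1]$ in Proposition~\ref{nearby} is exactly the one absorbed here.
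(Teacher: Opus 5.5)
Your Galois step and your closed-point case agree with the paper. In the Galois step, what you need is that $\mathrm{Gal}(\overline{K}/K_\infty)$, an extension of $\prod_{\ell\neq p,\,\mathrm{char}(k)}\mathbf{Z}_\ell(1)$ by wild inertia, has supernatural order prime to $p$; ``no pro-$p$ quotient'' is not enough. The closed-point case is not an obstacle at all: $\overline{M}=\lim X'_{\overline{K}}$ over affine \'etale neighbourhoods $X'$ of $x$, and perverse Artin vanishing on each affine $\overline{K}$-variety $X'_{\overline{K}}$, followed by a colimit, gives $D^{\leq 0}$.

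The genuine gap is the non-closed case, which is where the content of the proposition lies. If $e=\dim\overline{\{x\}}>0$, the same presentation still only gives $D^{\leq 0}$, because the $X'_{\overline{K}}$ have dimension $\dim X_\eta$. Your last paragraph proposes an affine Lefschetz theorem for the strictly local schemes $X^{sh}_x\otimes_V V'$ at finite levels $V'/V$, followed by a colimit. This cannot work: any bound valid at finite level is one degree too weak, since already for $X=\mathrm{Spec}(V)$ one has $\mathrm{cd}_p(K')=1$ for every finite $K'/K$. A finite-level bound cannot see the monodromy being killed. Your sentence that the strict localization at $x$ ``behaves like a closed point of a scheme of dimension $\dim X-e$'' is exactly what has to be proved, and it is only true after changing the base trait.

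The missing idea, used in the paper following \cite[4.4.5]{BBDG}, is this. Choose a uniformizer-preserving extension $V\to W$ of strictly henselian DVRs whose residue field is that of $\mathcal{O}^{sh}_{X,x}$, together with a factorization $V\to W\to\mathcal{O}^{sh}_{X,x}$ exhibiting $\mathcal{O}^{sh}_{X,x}$ as the strict henselization of a flat finitely presented $W$-scheme $Y$ at a closed point $y$ of $Y_s$. Then:
\begin{itemize}
\item since $t$ stays a uniformizer, $X^{sh}_x\otimes_V K_\infty\simeq Y^{sh}_y\otimes_W L_\infty$ with $L_\infty=\mathrm{Frac}(W)(t^{1/p^\infty})$;
\item the group $\mathrm{Gal}(\overline{L}/L_\infty)$ is still prime to $p$;
\item dimensions on the Milnor fibre drop by $e$ (e.g.\ $\dim Y_\eta=\dim X_\eta-e$ near $y$ for $X$ integral).
\end{itemize}
So the closed-point argument over $W$, run with $\overline{L}$ in place of $\overline{K}$, gives exactly $D^{\leq -e}$. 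For this reason the Galois reduction is best done after the change of trait. The paper first reduces to a shifted constant sheaf on an integral $X$, so nothing has to be moved to $Y$. With perverse coefficients you would also have to descend $F$ from the Milnor fibre to a neighbourhood of $y$ in $Y_\eta$ and check that $F[-e]$ is semiperverse there.

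Your alternative of cutting by $e$ general hypersurfaces through a closed specialization of $x$ produces a scheme not containing $x$. To make it work you would need two inputs you do not address:
\begin{itemize}
\item constructibility of $\Psi F$, to pass from general closed points of $\overline{\{x\}}$ to $x$;
\item a base change isomorphism $(\Psi F)|_{Z_s}\simeq\Psi(F|_{Z_\eta})$ for general hypersurface sections $Z$, which is a transversality statement for nearby cycles much deeper than the proposition.
\end{itemize}
Finally, the announced induction on $\dim\overline{\{x\}}$ is never actually used.
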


Note that $R\Gamma(X^{sh}_x \otimes_V K_\infty, F)$ can be identified with the stalk at $x$ of $i^* j'_* F$, where $j':X \otimes_V K_\infty \to X$ and $i:X_s \to X$ are the natural maps. 

\begin{proof}
By a standard devissage, we may assume that $X$ is integral and $F$ is the (shifted) constant sheaf, in which case we must show that
\[ R\Gamma(X^{sh}_x \otimes_V K_\infty, \Lambda) \in D^{\leq d_x},\]
where $d_x = \dim(X^{sh}_{s,x}) = \dim(X^{sh}_x) -1 = \dim(X_x^{sh} \otimes_V K)$.

We first prove the statement when $x \to X_s$ maps to a closed point, so $d_x = \dim(X_K)$. As $X_x^{sh} \otimes_V \overline{K}$ is a cofiltered limit of finitely presented affine $\overline{K}$-schemes of dimension $\leq d_x$,  Artin vanishing over $\overline{K}$ implies that $R\Gamma(X_x^{sh} \otimes_V \overline{K}, \Lambda) \in D^{\leq d_x}$. But the Galois group $G$ of $\overline{K}/K_\infty$ is a prime-to-$p$ profinite group, so the functor $\Gamma(G,-) = (-)^G$ is exact on discrete $\Lambda$-modules with a $G$-action. It follows that 
\[ R\Gamma(X_x^{sh} \otimes_V K_\infty, \Lambda) = R\Gamma(G,R\Gamma(X_x^{sh} \otimes_V \overline{K}, \Lambda)) \]
also lies in $D^{\leq d_x}$, as wanted. 

For general geometric points $x \to X_s$, we can reduce to the case of closed points by enlarging the residue field of the dvr $V$ to match $\kappa(\mathcal{O}_{X,x}^{sh})$: there exists a uniformizer preserving extension $V \to W$ of strictly henselian dvrs and a factorization $V \to W \to \mathcal{O}_{X,x}^{sh}$, such that the $W$-algebra $\mathcal{O}_{X,x}^{sh}$ is also the strict henselization $\mathcal{O}_{Y,y}^{sh}$ of a finitely presented flat integral $W$-scheme $Y$ at a closed point $y$ of the special fibre $Y_s$ (see \cite[\S 4.4.5]{BBDG} or \cite[Lemma 4.8]{BhattCMRPlus}). We can then replace $(V,X,x)$ with $(W,Y,y)$ to reduce to the case treated in the previous paragraph.
\end{proof}

\section{The abelian variety analog in characteristic $0$}

In this section, we record an argument\footnote{This argument should have appeared in \cite{BSS}: indeed, it was implicitly used to justify the placement of the question in \cite[Remark 2.11]{BSS} within the surrounding discussion.} for the abelian variety analog of Theorem~\ref{PnVanishing} in characteristic $0$; see Remark~\ref{rmkabvar} for more context. In fact, by the Lefschetz principle, it suffices to prove the analog for $k=\mathbf{C}$; in this case, we prove the analog more generally for compact complex tori.

\begin{theorem}
\label{AbVarVanChar0}
 Let $A$ be a compact complex torus over $\mathbf{C}$ of dimension $g$. For any constructible sheaf $F$ of $\Lambda$-modules on $A$, we have
\[ \colim_{n} H^i(A, [p^n]^* F) = 0\]
for $i > \dim(\mathrm{Supp}(F))$.
\end{theorem}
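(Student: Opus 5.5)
We would reduce to a statement about perverse sheaves and then use the main vanishing theorem of \cite{BSS} for perverse sheaves tensored with the universal local system. First, as in the passage from Theorem~\ref{PnVanishing} to Theorem~\ref{PnVanishingIntro}, it suffices to show the following. For every semiperverse constructible complex $K$ on $A$, the colimit $\colim_n R\Gamma(A,[p^n]^*K)$ lies in $D^{\leq 0}(\Lambda)$. Apply this to $K = F[\dim \mathrm{Supp}(F)]$, which is semiperverse. By d\'evissage along the perverse truncations and the finite perverse amplitude of constructible complexes, we may further assume $K$ is perverse. Since $[p^n]$ is finite \'etale, the projection formula gives
\[ R\Gamma(A,[p^n]^*K) = R\Gamma(A, K\otimes [p^n]_*\Lambda),\]
and $[p^n]_*\Lambda$ is the local system attached to the $\pi$-module $\Lambda[\pi/p^n\pi]$, where $\pi=\pi_1(A)\simeq \mathbf{Z}^{2g}$. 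The pullback maps in the colimit correspond to the maps $\Lambda[\pi/p^n]\to\Lambda[\pi/p^{n+1}]$ sending a coset to the sum of the cosets lying over it. Identifying $\Lambda[\pi/p^n]$ with $\Lambda$-valued functions on $\pi/p^n$, these are pullback of functions. Hence the colimit is the module $M := C^\infty(\mathbf{Z}_p^{2g},\Lambda)$ of locally constant functions on $\widehat{\pi}_p$, viewed as a $\Lambda[\pi]$-module.

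Write $\mathcal{L}$ for the universal local system with fibre $\Lambda[\pi]$. Then
\[ \colim_n R\Gamma(A,[p^n]^*K) \simeq R\Gamma(A,K\otimes\mathcal{L})\otimes^L_{\Lambda[\pi]} M .\]
Now invoke \cite{BSS}. For perverse $K$ on a compact complex torus, $N:=R\Gamma(A,K\otimes \mathcal{L})$ is concentrated in degree $0$; this is the ``generic vanishing'' statement proved there via the Artin-type vanishing on the universal cover. So the problem reduces to showing that $\mathrm{Tor}^{\Lambda[\pi]}_j(N_0,M)=0$ for $j>0$, where $N_0 = H^0(N)$ is a finitely generated $\Lambda[\pi]$-module.

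To compute this Tor, write $\gamma_1,\dots,\gamma_{2g}$ for a basis of $\pi$. Each $\Lambda[\pi/p^n]$ is resolved by the Koszul complex on the elements $\gamma_i^{p^n}-1$. So $\mathrm{Tor}_j(N_0,M)$ is the colimit over $n$ of Koszul homology $H_j(\gamma^{p^n}-1; N_0)$. The transition maps on Koszul complexes are induced by the factors $\Phi_n(\gamma_i):=(\gamma_i^{p^{n+1}}-1)/(\gamma_i^{p^n}-1)$ in the appropriate degrees. The main obstacle is to show that these colimits vanish in positive degrees. The idea we would try first is that modulo $p$ we have $\Phi_n(\gamma)\equiv(\gamma^{p^n}-1)^{p-1}$. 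Iterating $n\mapsto n+m$, a positive-degree Koszul class gets multiplied by a high power of the ideal $(p,\gamma_i^{p^n}-1)$. On the other hand, it is already annihilated by $\gamma_i^{p^n}-1$ up to boundaries, and $p$ is nilpotent on $\Lambda$. Together these should force the class to become a boundary after finitely many steps. Organizing this cleanly, for instance by filtering by powers of $p$ and arguing one variable at a time through the Koszul spectral sequence, is the heart of the argument. Once it is in place, the colimit is $N_0\otimes M$ in degree $0$, which proves Theorem~\ref{AbVarVanChar0}.
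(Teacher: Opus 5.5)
There is a genuine gap, and it sits exactly where you invoke \cite{BSS}. First, the Fourier--Mellin transform $N=R\Gamma(A,K\otimes\mathcal{L})$ of a perverse sheaf is \emph{not} concentrated in degree $0$. Already for $K=\Lambda_A[g]$ you get $N\simeq R\Gamma(\pi,\Lambda[\pi])[g]\simeq\Lambda[-g]$. Here $\Lambda=\Lambda[\pi]/\mathfrak{m}$ is the trivial module and $\mathfrak{m}$ the augmentation ideal; the Koszul complex on the regular sequence $\gamma_i-1$ has cohomology only in degree $2g$. Generic vanishing only says that the $H^i(N)$, $i\neq 0$, have small support. What \cite{BSS} actually supplies is the one-sided bound $\mathrm{FM}(P)\in D^{\geq 0}$ for perverse $P$. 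Applied to $P=\mathbf{D}K$ and combined with Verdier duality on $A$ and Grothendieck duality over the regular ring $\Lambda[\pi]$ (cf.\ \cite[Lemma 2.8]{BSS}), it gives $\dim\mathrm{Supp}\,H^i(N)_{\mathfrak m}\leq 2g-i$.

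Second, even granting your premise, the Tor vanishing you aim for is false, because $M$ is not flat. The chain map $K_\bullet(\gamma^{p^n}-1)\to K_\bullet(\gamma^{p^{n+1}}-1)$ lifting multiplication by $\prod_i\Phi_n(\gamma_i)$ acts on the summand spanned by $e_J$ by $\prod_{i\notin J}\Phi_n(\gamma_i)$. In top degree $2g$ this is the identity, so for instance $\mathrm{Tor}_{2g}(\Lambda,M)=\Lambda\neq 0$. In fact, for $\Lambda=\mathbf{F}_p$ your own computation identifies $M$ with the top local cohomology $H^{2g}_{\mathfrak m}(\Lambda[\pi])$. Hence $-\otimes^L M\simeq R\Gamma_{\mathfrak m}(-)[2g]$ and $\mathrm{Tor}_j(N_0,M)=H^{2g-j}_{\mathfrak m}(N_0)$, which vanishes for all $j>0$ only when $(N_0)_{\mathfrak m}$ is zero or maximal Cohen--Macaulay. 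The example $K=\Lambda_A[g]$ shows that higher Tor genuinely contributes: the colimit there is $\Lambda[g]$, coming from $H^0(A,\Lambda)$, which is precisely $\mathrm{Tor}_{2g}(\Lambda,M)$ placed in degree $-g$.

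The repair is to keep your (correct) identification $\colim_n R\Gamma(A,[p^n]^*K)\simeq N\otimes^L_{\Lambda[\pi]}M\simeq R\Gamma_{\mathfrak m}(N)[2g]$ and replace the Koszul argument by Grothendieck vanishing. We have $H^j_{\mathfrak m}(H^iN)=0$ for $j>\dim\mathrm{Supp}\,H^i(N)_{\mathfrak m}$, and together with the codimension bound this gives $R\Gamma_{\mathfrak m}(N)\in D^{\leq 2g}$, i.e.\ the colimit is connective. This is exactly the paper's proof, phrased there dually. The colimit is written as $R\Gamma_0$ of the Grothendieck dual of the completed transform $\mathrm{FM}(\mathbf{D}G)^{\wedge}$ via Lemma~\ref{GrothDualCompat}(3), and one concludes with \cite[Lemma 2.8, Proposition 2.7]{BSS}.
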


The proof relies on the Fourier--Mellin transform from \cite[\S 2]{BSS}. The latter organizes the cohomology of all rank $1$ twists of $F$ into a single coherent complex over the group algebra $\Lambda[\pi_1(A)]$. The key observation below is simply that the completion of $\Lambda[\pi_1(A)]$ at the origin is the inverse limit of the group algebras $\Lambda[\pi_1(A)/p^n]$; this permits translation of the commutative algebra properties of the Fourier--Mellin transform from \cite{BSS} into the vanishing statement required in the theorem using some standard results on Grothendieck duality in commutative algebra, recalled in Lemma~\ref{GrothDualCompat}.

\begin{proof}
Without loss of generality, we may assume $\Lambda=\mathbf{F}_p$. Moreover, by shifting, it suffices to show that if $G \in D^b_{cons}(A,\Lambda)$ is semiperverse, then $\colim_{n} R\Gamma(A, [p^n]^* G) $ is connective. By filtering $G$ for the perverse $t$-structure, we may also assume $G$ is perverse. In this case, we shall use the results from \cite[\S 2]{BSS} on the Fourier--Mellin transform to obtain the theorem.

 Let $S$ be the completion of the group algebra $R=\mathbf{F}_p[\pi_1(A)]$ at the origin, so $S$ is a formal power series in $2g$ variables. Note that each $R_n := \mathbf{F}_p[\pi_1(A)/p^n]$ is an artinian local ring and $S=\lim_n R_n$ via the natural map. More precisely, since the Frobenius on $R$ is induced by the multiplication by $p$ map on $\pi_1(A)$, the pro-systems $\{R_n\}$ and $\{R/\mathfrak{m}^{[p^n]}\}$ are naturally identified, where $\mathfrak{m}$ is the augmentation ideal $\mathfrak{m} = \ker(R \xrightarrow{\pi_1(A) \mapsto 1} \mathbf{F}_p)$, and $(-)^{[p^n]}$ denotes the $n$-th Frobenius power.

Write $K = \mathrm{FM}(\mathbf{D}G)^{\wedge} = \mathrm{FM}(\mathbf{D}G) \otimes_R S \in D^b_{coh}(S)$ for the completed stalk at $0$ of the Fourier-Mellin transform of the Verdier dual $\mathbf{D}G$ of $G$. Then the projective system $\{K_n := K \otimes_S R_n\}$ is identified with $\{R\Gamma(A, [p^n]^* \mathbf{D}G)\}$, where the transition maps are the trace maps. Taking $k$-linear duals termwise and using Verdier duality on the cohomological side, the inductive system $\{K_n^\vee\}$ is identified with the inductive system $\{R\Gamma(A, [p^n]^* G)\}$, where the transition maps are the natural pullback maps. Thus, our goal is to show that 
\[ \colim_n K_n^\vee\]
is connective. By standard arguments over noetherian local rings (see Lemma~\ref{GrothDualCompat} (3)), this colimit is identified with the local cohomology $R\Gamma_0(\mathbf{D}K)$ of the Grothendieck dual $\mathbf{D}K$ of $K$. As the local cohomological dimension of a finitely generated $S$-module is bounded above by its dimension, it suffices to show that $\dim \mathrm{Supp} (H^i(\mathbf{D}K)) \leq -i$. By \cite[Lemma 2.8]{BSS}, this is equivalent to showing that $K = \mathbf{D}^2K$ lies in $D^{\geq 0}$. But this was proven in \cite[Proposition 2.7]{BSS}, so we win.
\end{proof}

The following (well-known) result gives the compatibility of linear and Grothendieck duality over complete noetherian local $k$-algebras and was used above. The result is essentially a consequence of the complete-torsion equivalence as well as the fact that Grothendieck and linear duality coincide for finite length modules, though we give a direct argument avoiding this equivalence.

\begin{lemma}[Grothendieck and linear duality]
\label{GrothDualCompat}
Let $(S,\mathfrak{m})$ be a complete noetherian local $k$-algebra with residue field $k$ and normalized dualizing complex $\omega_S^\bullet$. Write $\mathbf{D}(-) = \mathrm{RHom}_S(-,\omega_S^\bullet)$ for $(-)^\vee = \mathrm{RHom}_k(-,k)$ for the displayed endofunctors of $D(S)$. 
\begin{enumerate}
\item For $M \in D(S)$, we have a natural isomorphism
\[ \mathbf{D}M \simeq R\Gamma_{\mathfrak{m}}(M)^\vee.\]
\item For $N \in D(S)$ and any finite length $S$-module $A$, we have a natural isomorphism
  \[\mathrm{RHom}_S(A,\mathrm{RHom}_k(N,k)) = (N \otimes_S A)^\vee.\]
\item For $M \in D(S)$, we have a natural isomorphism
\[ R\Gamma_{\mathfrak{m}}(\mathbf{D}M) \simeq \colim_n (M \otimes_S^L S/\mathfrak{m}^n)^\vee.\]
\end{enumerate}
\end{lemma}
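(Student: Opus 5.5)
The plan is to prove (1) by Grothendieck local duality, to obtain (2) from tensor--hom adjunction, and to deduce (3) by combining the two with a description of $R\Gamma_{\mathfrak{m}}$ as a colimit of $\mathrm{RHom}$'s out of finite length quotients.

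For (1), fix an injective hull $E = E_S(k)$. Because $S$ is complete with residue field $k$ and is a $k$-algebra, Matlis duality gives $\mathrm{Hom}_S(-,E) \simeq \mathrm{Hom}_k(-,k)$ on $\mathfrak{m}$-power torsion modules. To see this, note that $\colim_n \mathrm{Hom}_k(S/\mathfrak{m}^n,k)$ is an injective hull of $k$: it is the continuous dual of $S$, and injectivity follows from the $\mathrm{Hom}$--$\otimes$ adjunction in (2). The normalized dualizing complex satisfies $R\Gamma_{\mathfrak{m}}(\omega_S^\bullet) \simeq E$. For $M$ perfect, local duality then gives
\[ \mathbf{D}M = \mathrm{RHom}_S(M,\omega_S^\bullet) \simeq \mathrm{RHom}_S(R\Gamma_{\mathfrak{m}} M, E), \]
and for torsion complexes $\mathrm{RHom}_S(-,E) = (-)^\vee$, since $E$ is injective and $\mathrm{Hom}_S(T,E) = \mathrm{Hom}_k(T,k)$ for torsion $T$. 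Identifying $\mathrm{RHom}_S(R\Gamma_{\mathfrak m}M,E)$ with $\mathrm{RHom}_S(R\Gamma_{\mathfrak m}M, R\Gamma_{\mathfrak m}\omega_S^\bullet)$ and with $\mathrm{RHom}_S(R\Gamma_{\mathfrak m}M,\omega_S^\bullet)$ uses the adjunction $R\Gamma_{\mathfrak m}$ as right adjoint to the inclusion of torsion complexes, together with the fact that $R\Gamma_{\mathfrak m} \simeq R\Gamma_{\mathfrak m}(S)\otimes^L(-)$ is given by a Koszul--\v{C}ech complex. This last point makes $\mathrm{RHom}_S(R\Gamma_{\mathfrak m}M,-)$ also equal $\mathrm{RHom}_S(M,\mathrm{R}\mathrm{Hom}(R\Gamma_{\mathfrak m}S,-))$, and $\mathrm{RHom}_S(R\Gamma_{\mathfrak m}S,\omega^\bullet_S)=\omega^\bullet_S$ because $\omega_S^\bullet$ is derived complete, which holds as $S$ is complete and $\omega_S^\bullet$ is coherent. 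For a general $M$ I would instead argue directly with the \v{C}ech description, without any finiteness hypothesis: $\mathrm{RHom}_S(M, \omega) \simeq \mathrm{RHom}_S(M, R\mathrm{Hom}(R\Gamma_{\mathfrak m} S,\omega)) = \mathrm{RHom}_S(R\Gamma_{\mathfrak m}M,\omega) = \mathrm{RHom}_S(R\Gamma_{\mathfrak m}M, E) = R\Gamma_{\mathfrak m}(M)^\vee$. The second equality is tensor--hom adjunction, the third uses the torsion adjunction together with $R\Gamma_{\mathfrak m}\omega = E$, and the last is Matlis duality.

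For (2), apply the adjunction $\mathrm{RHom}_S(A,\mathrm{RHom}_k(N,k)) = \mathrm{RHom}_k(A\otimes^L_S N,k)$. Nothing here uses finite length, so (2) holds for every $A$.

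For (3), write $R\Gamma_{\mathfrak m}(P) = \colim_n \mathrm{RHom}_S(S/\mathfrak{m}^n, P)$ for any $P$; this is valid because $S$ is noetherian, so $\{S/\mathfrak m^n\}$ computes torsion in a way compatible with injective resolutions and filtered colimits. Take $P=\mathbf{D}M \simeq R\Gamma_{\mathfrak m}(M)^\vee$ from (1). Then (2) gives $\mathrm{RHom}_S(S/\mathfrak m^n, R\Gamma_{\mathfrak m}(M)^\vee) = (R\Gamma_{\mathfrak m}(M)\otimes^L_S S/\mathfrak m^n)^\vee$. Finally, $R\Gamma_{\mathfrak m}(M)\otimes^L S/\mathfrak m^n \simeq M\otimes^L S/\mathfrak m^n$, because $S/\mathfrak m^n$ is torsion, the cone of $R\Gamma_{\mathfrak m}S\to S$ is a localization away from $\mathfrak m$, and that localization kills torsion modules. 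Naturality in $n$ is clear, which yields the colimit formula.

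The main obstacle is step (1) for unbounded, non-coherent $M$, which is exactly the situation in which the lemma is used, since $K$ there lies in $D^b_{coh}$. The key inputs are derived completeness of $\omega_S^\bullet$ and the identification of $E$ with the continuous $k$-dual of $S$. If full generality causes trouble, I would restrict to $M\in D^b_{coh}(S)$, which suffices for the application.
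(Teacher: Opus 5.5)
Your argument is correct. For (2) and (3) it is exactly the paper's: the coinduction adjunction along $k \to S$ for (2), and for (3) the formula $R\Gamma_{\mathfrak m} = \colim_n \mathrm{RHom}_S(S/\mathfrak m^n,-)$, then (2) with $N = R\Gamma_{\mathfrak m}(M)$, then $R\Gamma_{\mathfrak m}(M)\otimes^L_S S/\mathfrak m^n \simeq M \otimes^L_S S/\mathfrak m^n$.

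The difference is in (1). Both proofs begin the same way, using derived completeness of $\omega_S^\bullet$ to rewrite $\mathbf{D}M$ as $\mathrm{RHom}_S(R\Gamma_{\mathfrak m}M,\omega_S^\bullet)$. You then use the torsion adjunction to replace $\omega_S^\bullet$ by $R\Gamma_{\mathfrak m}\omega_S^\bullet \simeq E$. You finish with Matlis duality, identifying $E$ with $\colim_n (S/\mathfrak m^n)^\vee$.

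The paper never introduces $E$. It writes $R\Gamma_{\mathfrak m}M = \colim_n M[\mathfrak m^n]$ and turns the colimit into a limit. It then observes that each $M[\mathfrak m^n]$ is an object of $D(S/\mathfrak m^n)$. There $\mathrm{RHom}_S(S/\mathfrak m^n,\omega_S^\bullet)$ is the normalized dualizing complex $(S/\mathfrak m^n)^\vee$ of an artinian ring. So Grothendieck duality is literally $k$-linear duality by the coinduction adjunction, for unbounded complexes, with no further work.

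Your route is the textbook form of local duality and makes the role of $E$ visible. The paper only compares Grothendieck and linear duality over the artinian quotients, which lets it bypass two points you pass over quickly.

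First, injectivity of $\colim_n \mathrm{Hom}_k(S/\mathfrak m^n,k)$ does not follow from the adjunction in (2) alone. That adjunction only shows $\mathrm{Hom}_k(S,k)$ is injective. Passing to its $\mathfrak m$-power torsion submodule needs Artin--Rees, equivalently the fact that $\Gamma_{\mathfrak m}$ preserves injectives over a noetherian ring.

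Second, the identification $\mathrm{RHom}_S(T,E) \simeq T^\vee$ for an unbounded derived-torsion complex $T$ needs more than the termwise Matlis statement, since a representative of $T$ need not have torsion terms. You need either a representative by torsion modules (e.g.\ $\Gamma_{\mathfrak m}$ of a K-injective resolution) or a generation argument from $k$ via the natural map $E \to \mathrm{Hom}_k(S,k)$.

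Both points are standard, so neither is a gap in substance.

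Finally, your closing remark is garbled. In the application $M = K$ lies in $D^b_{coh}(S)$, and your general-$M$ argument already covers all $M$, so no fallback is needed.
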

\begin{proof}
For $N \in D(S)$, write $N[\mathfrak{m}^n] := \mathrm{RHom}_S(S/\mathfrak{m}^n,N)$, so $R\Gamma_{\mathfrak{m}}(N) = \colim_n N[\mathfrak{m}^n]$.
\begin{enumerate}
\item This follows from the sequence of identifications 
\begin{align*}
 \mathbf{D}M &= \mathrm{RHom}_S(M,\omega_S^\bullet) \\
 &= \mathrm{RHom}_S(\colim_n M[\mathfrak{m}^n], \omega_S^\bullet) \\
 &= \lim_n \mathrm{RHom}_S(M[\mathfrak{m}^n],\omega_S^\bullet) \\
 &= \lim_n \mathrm{RHom}_k(M[\mathfrak{m}^n],k) \\
 &= R\Gamma_{\mathfrak{m}}(M)^\vee,
 \end{align*}
 where the first line is definitional, the second follows as $R\Gamma_{\mathfrak{m}}(M) \to M$ induces an isomorphism after applying $\mathrm{RHom}_S(-,N)$ into any derived $\mathfrak{m}$-complete $N \in D(S)$ (such as the coherent complex $\omega_S^\bullet$), the third and fifth are formal, while the fourth is a consequence of Grothendieck duality for $\mathfrak{m}^n$-torsion $S$-modules agreeing with $k$-linear duality on underlying $k$-vector spaces.

\item Write $a^\times:D(k) \to D(S)$ for the functor $\mathrm{RHom}_k(S,-)$, so $a^\times$ is the right adjoint to restriction of scalars along $k \to S$. Then $\mathrm{RHom}_k(N,k) = \mathrm{RHom}_S(N,a^\times k)$, so 
 \begin{align*}
 \mathrm{RHom}_S(A,\mathrm{RHom}_k(N,k)) &= \mathrm{RHom}_S(A, \mathrm{RHom}_S(N, a^\times k)) \\
 &= \mathrm{RHom}_S(N \otimes_S A, a^\times k) \\
 &= (N \otimes_S A)^\vee,
 \end{align*}
  as wanted.

\item We can apply $R\Gamma_{\mathfrak{m}}(-)$ to the isomorphism in (1) to obtain
\begin{align*}
 R\Gamma_{\mathfrak{m}}(\mathbf{D}M) &= R\Gamma_{\mathfrak{m}}(\mathrm{RHom}_k(R\Gamma_{\mathfrak{m}}(M),k)) \\
 &= \colim_n \mathrm{RHom}_S(S/\mathfrak{m}^n, \mathrm{RHom}_k(R\Gamma_{\mathfrak{m}}(M),k)) \\
 &= \colim_n (R\Gamma_{\mathfrak{m}}(M) \otimes_S S/\mathfrak{m}^n)^\vee \\
 &= \colim_n (M \otimes_S^L S/\mathfrak{m}^n)^\vee,
 \end{align*}
 where the first comes from (1), the second from the standard calculation of local cohomology in terms of $\mathrm{Ext}$-groups over noetherian rings, the third from (2) applied to $N=R\Gamma_{\mathfrak{m}}(M)$ and $A=S/\mathfrak{m}^n$ for varying $n$,  and the last is a consequence of $R\Gamma_{\mathfrak{m}}(M) \to M$ inducing an isomorphism after applying $- \otimes_S S/\mathfrak{m}^n$. \qedhere
 \end{enumerate}
\end{proof}
\bibliographystyle{amsalpha}
\bibliography{mybib}

\end{document}